\setlist[enumerate]{itemsep=0pt,label=$(\mathrm{\roman*})$, topsep=5pt}
\setlist[itemize]{itemsep=0pt, topsep=5pt, 
labelindent=\parindent,leftmargin=*}
\newtheorem{thm}{Theorem}[section]
\newtheorem{cor}[thm]{Corollary}
\newtheorem{lem}[thm]{Lemma}
\newtheorem{prop}[thm]{Proposition}
\theoremstyle{definition}
\newtheorem{dfn}[thm]{Definition}
\newtheorem*{claim}{Claim}
\newcommand\enclosebox[2]{%
  \BeforeBeginEnvironment{#1}{\begin{#2}}%
  \AfterEndEnvironment{#1}{\end{#2}}%
}
\newcommand{\ab}{\mathrm{ab}}
\newcommand{\Char}{\operatorname{char}}
\newcommand{\Ct}{C^{\mathrm{t}} }
\newcommand{\CtX}{\Ct(X)}
\newcommand{\CtXg}{\CtX^0}
\newcommand{\CXbar}{C(\Xbar)}
\newcommand{\CXbarg}{\CXbar^0}
\newcommand{\CXbargv}{(\CXbarg)^{\vee}}
\newcommand{\CXbarv}{\CXbar^{\vee}}
\newcommand{\CXpbar}{C(\Xbar')}
\newcommand{\CXp}{C(X')}
\newcommand{\CXv}{\CX^{\vee}}
\newcommand{\CX}{C(X)}
\newcommand{\CXD}{C(X,D)}
\newcommand{\CXDg}{\CXD^0}
\newcommand{\CY}{C(Y)}
\newcommand{\Cf}{\textit{cf.}\;}
\newcommand{\Coker}{\operatorname{Coker}}
\newcommand{\Cokertop}{\Coker_{\mathrm{top}}}
\renewcommand{\d}{\partial}
\newcommand{\dx}{\d_x}
\newcommand{\fbar}{\ol{f}}
\newcommand{\Fbar}{\ol{F}}
\newcommand{\fil}{\operatorname{fil}}
\newcommand{\Gal}{\operatorname{Gal}}
\newcommand{\HGal}{H_{\Gal}}
\newcommand{\Hom}{\operatorname{Hom}}
\newcommand{\isomto}{\stackrel{\simeq}{\to}}
\renewcommand{\Im}{\operatorname{Im}}
\newcommand{\inj}{\hookrightarrow}
\newcommand{\ilim}{\varinjlim}
\newcommand{\Id}{\operatorname{id}}
\newcommand{\Ker}{\operatorname{Ker}}
\newcommand{\Kt}{K^{\times}}
\newcommand{\kx}{k(x)}
\newcommand{\kxt}{\kx^{\times}}
\newcommand{\kX}{k(X)}
\newcommand{\kXx}{k(X)_x}
\newcommand{\ky}{k(y)}
\newcommand{\kyt}{\ky^{\times}}
\newcommand{\kY}{k(Y)}
\newcommand{\kYy}{\kY_y}
\newcommand{\kbar}{\ol{k}}
\newcommand{\Kbar}{\ol{K}}
\renewcommand{\L}{\mathbb{L}}
\newcommand{\m}{\mathfrak{m}}
\newcommand{\mK}{\m_K}
\newcommand{\NL}{\N(\L)}
\newcommand{\N}{\mathbb{N}}
\newcommand{\Np}{\N'}
\newcommand{\ol}[1]{\overline{#1}}
\renewcommand{\O}{\mathscr{O}}
\newcommand{\OK}{O_K}
\newcommand{\OKt}{\OK^{\times}}
\newcommand{\opcit}{\textit{op.\,cit.}}
\newcommand{\onto}[1]{\stackrel{#1}{\to}}
\newcommand{\plim}{\varprojlim}
\newcommand{\piab}{\pi_1^{\ab}}
\newcommand{\pitab}{\pi_1^{\mathrm{t},\ab}}
\newcommand{\piabX}{\piab(X)}
\newcommand{\pitabX}{\pitab(X)}
\newcommand{\pitabXg}{\pitab(X)^0}
\newcommand{\piabXbar}{\piab(\Xbar)}
\newcommand{\piabXbarg}{\piab(\Xbar)^0}
\newcommand{\piabXbargv}{(\piab(\Xbar)^0)^{\vee}}
\newcommand{\piabXD}{\piab(X,D)}
\newcommand{\piabY}{\piab(Y)}
\newcommand{\Q}{\mathbb{Q}}
\newcommand{\QZ}{\Q/\Z}
\newcommand{\Res}{\operatorname{Res}}
\newcommand{\rhoXD}{\rho_{X,D}}
\newcommand{\rhoX}{\rho_X}
\newcommand{\rhoXv}{\rhoX^{\vee}}
\newcommand{\rhoXm}{\rho_{X,m}}
\newcommand{\rhoXmv}{\rhoXm^{\vee}}
\newcommand{\rhoXt}{\rho_X^{\mathrm{t}}}
\newcommand{\rhoXtg}{\rho_X^{\mathrm{t},0}}
\newcommand{\rhoXbar}{\rho_{\Xbar}}
\newcommand{\rhoXbarg}{\rho_{\Xbar}^0}
\newcommand{\rhoXbargv}{(\rho_{\Xbar}^0)^{\vee}}
\newcommand{\rhoXbarv}{\rhoXbar^{\vee}}
\newcommand{\rhoXbarm}{\rho_{\Xbar,m}}
\newcommand{\rhoXbarmv}{\rhoXbarm^{\vee}}
\newcommand{\rXbar}{r(\Xbar)}
\DeclareMathOperator*{\restprod}%
 {\mathchoice{\ooalign{\ensuremath{\displaystyle\prod}\crcr\ensuremath{\displaystyle\coprod}}}%
             {\ooalign{\ensuremath{\textstyle\prod}\crcr\ensuremath{\textstyle\coprod}}}%
             {\ooalign{\ensuremath{\scriptstyle\prod}\crcr\ensuremath{\scriptstyle\coprod}}}%
             {\ooalign{\ensuremath{\scriptscriptstyle\prod}\crcr\ensuremath{\scriptscriptstyle\coprod}}}%
 }
\newcommand{\surj}{\twoheadrightarrow}
\newcommand{\ssm}{\smallsetminus}
\newcommand{\Spec}{\operatorname{Spec}}
\newcommand{\tor}{\mathrm{tor}}
\newcommand{\ur}{\mathrm{ur}}
\newcommand{\Ver}{\operatorname{Ver}}
\newcommand{\vp}{\varphi}
\newcommand{\vK}{v_K}
\newcommand{\wh}[1]{\widehat{#1}}
\newcommand{\Xbar}{\ol{X}}
\newcommand{\Xinf}{X_{\infty}}
\newcommand{\Xpbar}{\Xbar'}
\newcommand{\Ybar}{\ol{Y}}
\newcommand{\Yinf}{Y_{\infty}}
\newcommand{\Z}{\mathbb{Z}}
\newcommand{\Zhat}{\wh{\Z}}
\newcommand{\sn}{\smallskip\noindent}
\title{Class field theory for open curves over local fields}
\author{Toshiro Hiranouchi}
\begin{document}
\pagenumbering{arabic}
\maketitle

\begin{abstract}
We study the class field theory for open curves over a local field. 
After introducing the reciprocity map,  
we determine the kernel and the cokernel of this map. 
In addition to this, 
the Pontrjagin dual of the reciprocity map is also investigated.  
This gives the one to one correspondence 
between the set of finite abelian \'etale 
coverings and the set of finite index open 
subgroups of the id\`ele class group as in the classical class field theory 
under some assumptions.
\end{abstract}


\section{Introduction}
\label{Introduction}
In this note, 
we present the class field theory 
for open (=non proper) curves over a local field with 
arbitrary characteristic. 
Here, a \textbf{local field} means a complete discrete valuation field 
with finite residue field. 
For 
a local field with characteristic $0$,   
a large number of studies have been made 
even for higher dimensional open varieties over local fields 
(e.g., \cite{JS03}, \cite{Hir10}, \cite{Yam09}, and \cite{Yam13}).  
Accordingly, 
our main interest is in the case of positive characteristic local fields. 

To state our results precisely, 
let $k$ be a local field with $\Char(k) = p>0$. 
Let $\Xbar$ be a proper, smooth and geometrically connected curve over $k$ and 
$X$ a nonempty open subscheme in $\Xbar$.  
We often say that the pair 
$X \subset \Xbar$ is an \textbf{open curve} (\Cf Def.\ \ref{def:open_curve}).
A topological group  
$\CX$  which is called the {id\`ele class group}, 
and the {reciprocity map} $\rhoX:\CX \to \piabX$ 
are introduced 
as in \cite{Hir10} (Def.\ \ref{def:CX} and Def.\ \ref{def:rhoX}). 
In this note, we determine the kernel $\Ker(\rhoX)$ 
and the cokernel  
$\Cokertop(\rhoX) := \piabX/\ol{\Im(\rhoX)}$
of $\rhoX$ as (Hausdorff) topological groups, 
where $\ol{\Im(\rhoX)}$ is the topological closure  
of the image $\Im(\rhoX)$. 
One of the main results in this note is the following theorem. 

\begin{thm}[Thm.\ \ref{thm:coker} and Thm.\ \ref{thm:main2}]
\label{thm:main2intro}
Let $X \subset \Xbar$ be as above. 
For the reciprocity map $\rhoX:\CX\to\piabX$, we have 

\begin{enumerate}
\item 
$\Cokertop(\rho_X) \simeq \Zhat^r$ for some $r \in \Z_{\ge 0}$, and   
\item $\Ker(\rhoX)$ is the maximal $l$-divisible subgroup of $\CX$ for all prime number $l\neq p$. 
\end{enumerate}
\end{thm}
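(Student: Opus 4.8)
The plan is to study the reciprocity map $\rhoX$ through its relation to the reciprocity maps of the proper curve $\Xbar$ and of local contributions at the points of $\Xbar \ssm X$, together with a comparison with the known class field theory over the algebraically closed residue situation. Concretely, I would first recall the defining presentation of $\CX$: it sits in an exact sequence relating $\CX$ to $\CXbar = C(\Xbar)$ and to the ramification data along the boundary $\Xbar \ssm X$, and correspondingly $\piabX$ fits in an exact sequence with $\piabXbar$. The strategy is then to reduce the computation of $\Ker(\rhoX)$ and $\Cokertop(\rhoX)$ to the corresponding statements for the proper curve $\Xbar$ (where, for a local field $k$, the class field theory is essentially classical/known) plus an analysis of the tamely and wildly ramified parts at the boundary.

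For part (i), I would argue that $\Cokertop(\rhoX)$ is a quotient of $\Cokertop(\rhoXbar)$, and that the latter is isomorphic to $\Zhat^{r'}$ for a suitable $r' \ge 0$ — this is where the local base field enters, via the structure of $\Gkab \cong \Zhat \times (\text{finite or profinite unit part})$ and the fact that the geometric part of $\piabXbar$ is covered, up to a profinite free quotient, by the divisible group $\CXbarg$. The key point is that after taking topological closure of the image, the only surviving part of the cokernel is the "unramified'' $\Zhat$-direction(s), because the idèle class group surjects onto everything that is finitely generated over the geometric fundamental group and onto the torsion. Hence $\Cokertop(\rhoX) \simeq \Zhat^r$ with $r \le r'$.

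For part (ii), the plan is to prove the two inclusions separately. First, $\Ker(\rhoX)$ is $l$-divisible for $l \ne p$: here one uses that, modulo $l^n$, the map $\rhoXln : \CXpn \to \piabX/l^n$ (with the obvious notation $\CX/l^n$) is \emph{injective} — this should follow from the mod-$l^n$ class field theory, i.e.\ from a duality between $\CX/l^n$ and $H^1_{\mathrm{\acute et}}(X, \Z/l^n)$-type groups, using that $l \ne p$ puts us in the "good'' characteristic where étale cohomology behaves well and Kato--Saito / Bloch-type results apply. Injectivity of $\rhoXln$ for all $n$ forces $\Ker(\rhoX) \subseteq \bigcap_n l^n \CX$, i.e.\ $\Ker(\rhoX)$ is contained in the maximal $l$-divisible subgroup. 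Conversely, any $l$-divisible subgroup maps into $\bigcap_n l^n \piabX$; since $\piabX$ is profinite, $\bigcap_n l^n \piabX$ is the part with no $\Z/l$-quotient, and one checks that the image of a divisible group in a profinite group, being itself divisible, must be trivial once we know $\piabX$ has no nonzero $l$-divisible subgroup — equivalently, that $\rhoX$ composed with projection to each finite quotient kills divisible subgroups. Combining the two inclusions gives the equality.

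The main obstacle I anticipate is establishing the injectivity of $\rhoXln$ for $l \ne p$ at the level of the open curve $X$ rather than the proper curve $\Xbar$: one must control the boundary contributions and verify that the ramification filtration on $\piabX$ modulo $l^n$ is detected by the idèle-theoretic side. This likely requires the duality/finiteness results from the construction of $\rhoX$ (Def.\ \ref{def:rhoX} and the surrounding material) together with a Bloch--Kato--Saito style comparison; once that injectivity is in hand, the rest is formal manipulation with profinite and divisible groups and a dévissage to $\Xbar$.
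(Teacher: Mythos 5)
Your overall architecture (reduce to the proper curve plus boundary contributions, use mod-$m$ injectivity for the kernel, use that profinite groups contain no divisible elements for the reverse inclusion) is the same as the paper's, but two of your key steps have genuine gaps. For (i), you assert that $\Cokertop(\rhoX)$ is a quotient of $\Cokertop(\rhoXbar)$, giving $r\le r'$. The surjection $\piabX\surj\piabXbar$ gives the \emph{opposite} relation for free: $\Cokertop(\rhoXbar)$ is a quotient of $\Cokertop(\rhoX)$. The entire content of the theorem is that allowing ramification along $\Xinf$ creates no \emph{new} completely split directions, and your justification (``the id\`ele class group surjects onto everything that is finitely generated over the geometric fundamental group and onto the torsion'') does not engage with this. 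The paper proves it by dualizing: $\Cokertop(\rhoX)^{\vee}=\Ker(\rhoXv)$, and the localization sequence for $\Xinf\inj\Xbar$ combined with Kato's two-dimensional local reciprocity isomorphisms $\rho_{\kXx}^{\vee}$ at each boundary point (which embed $H^2_x(\Xbar,\QZ)$ into $U^0K_2(\kXx)^{\vee}$) yields $\Ker(\rhoXv)\simeq\Ker(\rhoXbarv)\simeq(\QZ)^{\rXbar}$. Some such local input at the points of $\Xinf$ is unavoidable, and your sketch does not identify it.

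For (ii), the forward inclusion contains a non sequitur. From the injectivity of $\rhoXln$ you correctly deduce $\Ker(\rhoX)\subseteq\bigcap_n l^n\CX$, but you then write ``i.e.\ $\Ker(\rhoX)$ is contained in the maximal $l$-divisible subgroup.'' These are different statements: for a general abelian group $A$ the subgroup $\bigcap_n l^nA$ can be strictly larger than the maximal $l$-divisible subgroup, and membership in $\bigcap_n l^n\CX$ does not make $\Ker(\rhoX)$ itself $l$-divisible. The paper closes exactly this gap with Lemma 7.7 of \cite{JS03}: besides the injectivity of all $\rhoXm$ for $m\in\NL$, one needs the torsion of $\plim_{m\in\NL}\piabX/m$ to be killed by some $N\in\NL$, which is where the structure theorem for the tame fundamental group $\pitabX$ (finite torsion part, a $\Zhat^{\rXbar}$-quotient, and the finitely generated $(G_k^{\ab})_{\L}$) enters; one must also verify that $\Ker(\piabX\to\plim_m\piabX/m)$ is $l$-torsion free before the $l$-divisibility of the kernel of $\CX\to\plim_m\CX/m$ descends to $\Ker(\rhoX)$. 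None of these steps is formal, and your sketch omits all of them. Your reverse inclusion (an $l$-divisible subgroup has divisible image in the profinite group $\piabX$, hence trivial image) is correct and is exactly the paper's argument.
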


\noindent
The theorem above is known 
for $X = \Xbar$(\cite{Sai85}, \cite{Yos03}) 
which corresponds to the unramified class field theory.   
Here, 
the invariant $r = \rXbar$ 
called the \textbf{rank} of $\Xbar$ (\cite{Sai85}, Def.\ 2.5) 
which 
depends on the type of the reduction of $\Xbar$. 
It is known that 
the quotient group 
$\Cokertop(\rho_X) = \piabX/\ol{\Im(\rhoX)}$
classifies \textbf{completely split coverings} of $X$, 
that is, 
finite abelian \'etale coverings of $X$ 
in which  
any closed point $x \in X$ splits completely 
(\cite{Sai85}, Chap.\ II, Def.\ 2.1). 
For example, we have $r=0$ if $\Xbar$ has good reduction 
(see also Thm.\ \ref{thm:ucft} and its remark). 

For a topological abelian group $G$, 
we define the \textbf{Pontrjagin dual group} by  
\[
\xymatrix@C=5mm{
G^{\vee}:= \set{ \mbox{continuous homomorphism $G\to \QZ$ with finite order}}
}
\] 
(\Cf Notation). 
Using this, the reciprocity map $\rhoX$ induces $\rhoXv : \piabX^{\vee} \to  \CXv$.

\begin{thm}[Thm.\ \ref{thm:main}]
\label{thm:mainintro} 
Let $X \subset \Xbar$ be as above.  
We assume $r(\Xbar) = 0$. 
Then, the map $\rhoXv:\piabX^{\vee} \to \CXv$ is bijective. 
\end{thm}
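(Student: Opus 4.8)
The plan is to handle injectivity and surjectivity of $\rhoXv$ separately. Injectivity will be immediate from the computation of $\Cokertop(\rhoX)$; surjectivity reduces, by Pontrjagin duality, to the injectivity of the reductions $\rhoX/N$, which is known away from $p$ and which at $p$ is the heart of the matter.

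\textbf{Injectivity.} A character $\chi\in\piabX^{\vee}$ lies in $\Ker(\rhoXv)$ precisely when $\chi\circ\rhoX=0$, i.e.\ when $\chi$ kills $\Im(\rhoX)$; being continuous of finite order, $\chi$ is locally constant, so it then kills $\ol{\Im(\rhoX)}$ and factors through $\Cokertop(\rhoX)$. Thus $\Ker(\rhoXv)\cong\Cokertop(\rhoX)^{\vee}$, and by Theorem~\ref{thm:main2intro}(i) together with the hypothesis $r(\Xbar)=0$ this equals $(\Zhat^{0})^{\vee}=0$. Hence $\rhoXv$ is injective. (This also shows that $r(\Xbar)=0$ is exactly what injectivity requires, since $(\Zhat^{r})^{\vee}\cong(\QZ)^{r}$.)

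\textbf{Surjectivity.} By Pontrjagin duality --- applied to the profinite group $\piabX$ and to the reductions $\CX/N$ of $\CX$ with their induced topology --- the surjectivity of $\rhoXv$ is equivalent to the injectivity of $\rhoX/N\colon\CX/N\to\piabX/N$ for every $N\ge 1$, and by the Chinese remainder theorem one reduces to $N=l^{m}$ a prime power. For $l\neq p$ this is already part of the picture established for Theorem~\ref{thm:main2intro}: the prime-to-$p$ part of the reciprocity map is controlled through the comparison of $\CX/l^{m}$ with \'etale cohomology, and in particular $\Ker(\rhoX/l^{m})=0$. Concretely, a finite-order character of $\CX$ whose order is prime to $p$ automatically vanishes on $\Ker(\rhoX)$, since by Theorem~\ref{thm:main2intro}(ii) the latter is $l$-divisible for the relevant primes $l$ and a finite $l$-group has no nonzero $l$-divisible subgroup; it then extends over the dense image $\Im(\rhoX)\subseteq\piabX$.

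\textbf{The $p$-part --- the main obstacle.} At $l=p$ this argument breaks down: Theorem~\ref{thm:main2intro}(ii) says nothing about $p$-divisibility of $\Ker(\rhoX)$, and a $p$-power-order character of $\CX$ need not kill $\Ker(\rhoX)$ for any formal reason --- $l$-divisibility for the primes $l\neq p$ imposes no constraint on a map into a $p$-group. One must therefore prove $\Ker(\rhoX/p^{n})=0$ (equivalently, $\rhoX/p^{n}$ bijective, once density of $\Im(\rhoX)$ is invoked) by a direct computation of both sides. The plan is to match them cohomologically: on the \'etale side $\Hom(\piabX,\Z/p^{n})\cong H^{1}_{\mathrm{et}}(X,\Z/p^{n})$ is described by Artin--Schreier--Witt theory --- Witt-vector cohomology of $X$ modulo the Frobenius-minus-identity operator, together with the wild ramification contributions along $\Xbar\ssm X$ --- while $\CX/p^{n}$ is built from the unit groups of the complete local fields at the closed points of $X$ and of the boundary, whose $p$-parts are governed by the characteristic-$p$ local reciprocity isomorphism (again via Witt vectors), and the proper case $X=\Xbar$ of \cite{Sai85} and \cite{Yos03} accounts for the contribution of $\Xbar$ itself. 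I expect the reconciliation of the wild part of $\piabX$ with the ramification filtration on the id\`eles to be the main technical obstacle. Granting it, combining the primary components with the duality of the first two steps yields that $\rhoXv$ is injective and surjective, hence bijective.
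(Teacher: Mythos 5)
Your treatment of injectivity matches the paper's: characters killing $\Im(\rhoX)$ factor through $\Cokertop(\rhoX)\simeq\Zhat^{r(\Xbar)}$, which vanishes under the hypothesis. But there are two genuine problems with the surjectivity half. First, the reduction you propose --- ``surjectivity of $\rhoXv$ is equivalent to injectivity of $\rhoX/N$ for every $N$'' --- is not justified in this setting. That equivalence requires either finiteness of $\CX/N$ or a Pontrjagin duality valid for the topology on $\CX$, and the paper explicitly warns that $\CX$ need not be locally compact, so kernel and cokernel of the dual map must be computed independently. The equivalence does work for $N$ prime to $p$, because there $\CX/N$ injects into the \emph{finite} group $\piabX/N\simeq\pitabX/N$ (Lem.\ \ref{lem:rhoXm}), and the dual of an injection of finite groups is surjective --- this is exactly the paper's prime-to-$p$ argument in Prop.\ \ref{prop:p}. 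But for $N=p^n$ the group $\CX/p^n$ is not finite (it contains quotients of $K_2(\kXx)/p^n$ for $x\in\Xinf$), an injection $\rhoX/p^n$ would not by itself let you extend a continuous finite-order character, and in fact the paper never proves injectivity of $\rho_{X,p^n}$ at all; it proves surjectivity of $\rho_{X,p^n}^{\vee}$ directly.

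Second, and more seriously, you do not actually prove the $p$-part: you sketch a plan (Artin--Schreier--Witt description of $H^1(X,\Z/p^n)$, matching against the ramification filtration on the id\`eles), identify the reconciliation of the wild parts as ``the main technical obstacle,'' and then write ``granting it.'' That obstacle is precisely the content of the theorem, so the proof is incomplete at its core. The paper's actual route is quite different and worth internalizing: it dualizes the localization sequence for $\Xinf\inj\Xbar$ with $\Z/p^n$-coefficients, uses the bijectivity of the dual local reciprocity maps $\rho_{\kXx,p^n}^{\vee}$ \eqref{eq:rhoKmv} and the proper case $\rho_{\Xbar,p^n}^{\vee}$ (Cor.\ \ref{cor:ucftmv}) to see that a character $\vp\in(\CX/p^n)^{\vee}$ lifts after restricting obstructions to $\bigoplus_x H^2_x(\Xbar,\Z/p^n)$, then kills the resulting class in $H^2(\Xbar,\Z/p^n)$ by passing to a finite extension $k'/k$ (using $H^2(\Xbar\otimes_k\kbar,\Z/p^n)=0$), and finally descends back via the norm map $N_{X'/X}$, the projection formula (Lem.\ \ref{lem:pf}), a reduction to cyclic $p$-extensions, and a Hochschild--Serre argument identifying $\Ker(N^{\vee}_{p^n})$ with $G^{\vee}$. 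None of this machinery appears in your sketch, so the $p$-part must be regarded as an unproved assertion rather than a proof.
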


%
%
Since $\piabX$ is compact,   
the injectivity of $\rhoXv$ in Thm.\ \ref{thm:mainintro} 
is deduced from Thm.\ \ref{thm:main2intro} (i).  
However, our id\`ele class group $\CX$ may not be locally compact.  
We have to determine $\Ker(\rhoX)$ and $\Coker(\rhoXv)$ 
independently. 
From the second main theorem (Thm.\ \ref{thm:mainintro}), under the assumption $r(\Xbar) = 0$,  
we have the following one to one correspondence as in the classical class field theory: 
\[
\xymatrix@C=5mm{
 \Set{\mbox{finite abelian \'etale covering of $X$}} 
 \ar@{<->}[r]^-{\mathrm{1:1}} & \Set{\mbox{finite index 
 open subgroup of $C(X)$}}. 
}
\]

\subsection*{Contents} 
The contents of this note is the following:

\begin{itemize}
\item Sect.\ \ref{sec:local}\,: We review some definitions and results of class field theory 
for $2$-dimensional local fields following \cite{Kat79} and \cite{Kat80}. 
 \item Sect.\ \ref{sec:class groups}\,: 
	For an open curve $X \subset \Xbar$ over a local field, 
	the id\`ele class group $\CX$ and the reciprocity map 
	$\rhoX:\CX \to \piabX$ are introduced (Def.\ \ref{def:CX} and Def.\ \ref{def:rhoX}). 
	We also define the fundamental group $\piabXD$ as a quotient of $\piabX$ 
	which classifies abelian \'etale coverings of $X$ with bounded ramification along a
	given effective Weil divisor $D$ on $\Xbar$ (Def.\ \ref{def:piabXD}). 

\item Sect.\ \ref{sec:main}\,: After recalling the unramified class field theory (Thm.\ \ref{thm:ucft}),  
	we study the structure of the tame fundamental group $\pitabX = \piab(X, \Xinf)$, 
	where $\Xinf = \sum_{x\in \Xbar\ssm X} 1 [x]$ considering as a Weil divisor on $\Xbar$.  
	Using this structure theorem, we prove Thm.\ \ref{thm:main2intro} (=Thm.\ \ref{thm:main2}).

\item Sect.\ \ref{sec:mainv}\,: 
	Following the proof of the class field theory for 
	curves over {\it global fields} (\cite{KS83a}, Thm.\ 3, \cite{KS86}, Thm.\ 9.1) 
	basically, 
	we show Thm.\ \ref{thm:mainintro} (=Thm.\ \ref{thm:main}). 
\end{itemize}

\subsection*{Notation}
In this note,  
a {\bf local field} we mean 
a complete discrete valuation field with finite residue field. 
Throughout this note, 
we use the following notation: 
\begin{itemize}
\item $p$\,: a fixed prime number, and 
\item $\Np$\,: the set of $m\in \Z_{\ge 1}$ which is prime to $p$. 
\end{itemize}

\noindent 
For a field $F$, 
\begin{itemize}
\item $\Char(F)$\,: the characteristic of $F$, 
\item $\Fbar$\,: a separable closure of $F$,  
\item $G_F:=\Gal(\Fbar/F)$\,: the Galois group of the extension $\Fbar/F$, 
\item $F^{\ab}$\,: the maximal abelian extension of $F$ in $\Fbar$, 
\item $G_F^{\ab}:= \Gal(F^{\ab}/F)$\,: the Galois group of $F^{\ab}/F$, 
\item $H^n_{\Gal}(F,M)$\,: the Galois cohomology group 
of $G_F$ with coefficients in a $G_F$-module $M$ (\Cf \cite{Kat82b}), 
and 
\item $K_2(F)$\,: the Milnor $K$-group of degree $2$ 
which is defined by 
\[
  K_2(F) = \left(F^{\times} \otimes_{\Z} F^{\times}\right)/J
\]
where $J$  is the subgroup generated by 
elements of the form $a\otimes (1-a)$ $(a \in F^{\times})$.  
The element in $K_2(F)$ represented by $a\otimes b \in F^{\times}\otimes_{\Z} F^{\times}$  
is denoted by $\set{a,b}$ (\Cf \cite{Mil70}). 
\end{itemize}

\sn 
Let $A$ be an abelian group whose operation is written additively.   
The abelian group $A$ is said to be 
\textbf{divisible} if, for every $n\in \Z_{\ge 1}$ and every $x \in A$, 
there exists $y \in A$ such that $ny=x$.  
The abelian group $A$ is \textbf{$l$-divisible} for a prime $l$, 
if for all  $n\in \Z_{\ge 1}$ and every $x \in A$, there exists $y \in A$ such that $l^ny=x$.
For $n\in \Z_{\ge 1}$, 
we use the following notation on $A$: 
\begin{itemize}
\item $A/n:=$ the cokernel of the map $n : A \to A$ 
defined by $x\mapsto nx$, and 
\item $A_{\tor}$\,: the torsion part of $A$. 
\end{itemize}
When $A$ is a topological abelian group, define 
\begin{itemize} 
\item $A^{\vee}$\,: the set of all continuous homomorphisms $A\to \QZ$ 
of {\it finite order}, 
where $\QZ$ is given the discrete topology. 
\end{itemize}
A {\bf curve} over a field $F$ means  
an integral separated scheme of dimension $1$ over $\Spec(F)$. 
For a connected Noetherian scheme $X$, we denote by 
\begin{itemize}
\item $\piabX$\,: the abelianlization of the \'etale fundamental group of $X$ 
(\cite{SGA1}) omitting the base point,  
\item $H^n(X,\mathscr{F})$\,: the \'etale cohomology group of an \'etale sheaf 
$\mathscr{F}$ on $X$, and 
\item $H^n_Z(X,\mathscr{F})$\,: the \'etale cohomology group of an \'etale sheaf 
$\mathscr{F}$ on $X$ with support in $Z$.
\end{itemize}

%

\subsection*{Acknowledgments}
This work was supported by KAKENHI 25800019.

\section{Local class field theory}
\label{sec:local}

For a field $F$ with $\Char(F) = p$, 
$n \in \Z_{\ge 0}$ and $r\in \Z_{\ge 1}$, 
we define 
\[
	H^q_{\Gal}(F, \Z/p^r(n)) := H^{q-n}_{\Gal}(F, 
	W_r\Omega_{\Fbar,\log}^n), 
\]
where $W_r\Omega_{\Fbar,\log}^n$ 
is the Galois module 
defined by the 
\'etale sheaf of 
the logarithmic part of 
the de Rham-Witt complex (\cite{Ill79}).  
Recall that $\Np$ is 
the set of $m\in\Z_{\ge 1}$ which is prime to $p$ (\Cf Notation). 
For $m\in \Np$, define 
$\Z/m(0) := \Z/m$ with the trivial action of $G_F$, and 
$\Z/m(n) := \mu_m(\Fbar)^{\otimes n}$ 
for $n\ge 1$, 
where 
$\mu_m(\Fbar)$ is the Galois module of  
$m$-th roots of unity in $\Fbar$.  
We define (following \cite{Kat80}, Sect.~3.2, Def.\ 1)
\[
	H^0(F)  := \ilim_{m\in \Np} H^0_{\Gal}(F, \Z/m(-1)),
\] 	
where $\Z/m(-1) := \Hom(\mu_m(\Fbar),\QZ)$ 
on which $G_F$ acts by $f \mapsto f\circ \sigma^{-1}$ for $\sigma \in 
G_F, f \in \Z/m(-1)$
(\Cf \cite{Kat80}, Sect.\ 1.2). 
For $n\in \Z_{\ge 1}$, 
\[	
H^n(F) :=  \ilim_{m \in \Np} H^n_{\Gal}(F, \Z/m (n-1)) \oplus 
	\ilim_{r \in \Z_{\ge 1}} H^n_{\Gal}(F,\Z/p^r(n-1)).
\]
Using these, 
it is known 
that we have
\begin{equation}
\label{eq:GFv}
  H^1(F) \simeq (G_F)^{\vee} \simeq (G_F^{\ab})^{\vee} 
\end{equation}
(\Cf \cite{Kat80}, Sect.\ 3.2; see also \cite{Ras95}, Chap.\ 2). 
From this isomorphism, we identify $H^1(F)$ and $(G_F^{\ab})^{\vee}$ in the following.

\subsection*{$2$-dimensional local class field theory}
We recall the $2$-dimensional local class field theory 
following \cite{Kat79} and \cite{Kat80}. 
For detailed expositions on this section, 
we also recommend \cite{Ras95}, Chap.\ 2. 

\begin{dfn}
\label{def:2dim}
A {\bf $2$-dimensional local field}
is a complete discrete valuation field whose residue field is a local field.
\end{dfn}

Throughout this section, 
we fix such a field and use the following notation:  

\begin{itemize}
\item $K$:\, a $2$-dimensional local field of $\Char(K) = p$, 
\item $\vK:K^{\times} \to \Z$: the valuation of $K$, 
\item $\OK:=\set{f \in K | \vK(f)\ge 0}$:\, the valuation ring of $K$,
\item $\mK:=\set{f \in K | \vK(f) >0}$:\, the maximal ideal of $\OK$,  
\item $k:= \OK/\mK $:\, the residue field of $K$, and 
\item $U_K := \OKt$:\, the group of units in $\OK$.    
\end{itemize}

%
The class field theory of $K$ describes 
the abelian Galois group $G_K^{\ab} = \Gal(K^{\ab}/K)$ by  
a canonical homomorphism 
$\rho_K:K_2(K) \to  G_K^{\ab}$ 
called the \textbf{reciprocity map} (defined in \cite{Kat80}, Sect.\ 3.2). 

\begin{prop}[\cite{Kat80}, Sect.\ 3.2, Cor.\ 1 and 2; see also \cite{Ras95}, Sect.\ 2.1]
\label{prop:rhoK}
\begin{enumerate}
\item 
  	We have the following commutative diagram
  	\[
  		\xymatrix{
    	K_2(K) \ar[r]^-{\rho_K}\ar[d]_{\d_K}&  G_K^{\ab} \ar[d] \\
    	k^{\times} \ar[r]^-{\rho_k} & G_k^{\ab},
  		}
  	\]
  	where  $\rho_k$ is the reciprocity map of $k$, 
  	the right vertical map is the restriction,  
  	and  
	$\d_K$ is the \textbf{boundary map} 
defined by 
\begin{equation}
\label{eq:tame_symbol}
\xymatrix{
 \d_K(\set{f,g}) := (-1)^{v_K(f)v_K(g)}f^{v_K(g)}g^{-v_K(f)} \bmod \m_K, 
}
\end{equation} 
for $\set{f,g}\in K_2(K)$. 

\item 
For a finite extension $L/K$, the following diagram is commutative:
\[
  \xymatrix{
  K_2(K) \ar[r]^-{\rho_K} & G_K^{\ab}\\
  K_2(L) \ar[r]^-{\rho_L}\ar[u]^{N_{L/K}} & G_L^{\ab}\ar[u]_{\Res_{L/K}}, 
  }
\]
where $N_{L/K}$ is the norm map, 
and the right vertical map $\Res_{L/K}$ is the restriction. 

\item 
For a finite extension $L/K$, the following diagram is commutative:
\[
  \xymatrix{
  K_2(K) \ar[r]^{\rho_K}\ar[d]_{i_{L/K}} & G_K^{\ab}\ar[d]^{\Ver_{L/K}}\\
  K_2(L) \ar[r]^{\rho_L} & G_L^{\ab}, 
  }
\]
where $i_{L/K}$ is the map induced from the inclusion $K \inj L$, 
and the right vertical map $\Ver_{L/K}$ is the 
transfer map (\cite{NSW}, Sect.\ 1.5).
\end{enumerate}
\end{prop}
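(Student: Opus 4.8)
The plan is to reduce all three compatibilities to formal properties of cup products together with Kato's invariant isomorphism, using the cohomological construction of $\rho_K$. Recall that $\rho_K$ comes from the cup-product pairing $H^1(K)\times K_2(K)\to H^3(K)$, in which $K_2(K)$ is identified, level by level, with $H^2_{\Gal}(K,\Z/m(2))$ via the Galois symbol $\{f,g\}\mapsto\{f\}\cup\{g\}$ for $m\in\Np$ and with $H^2_{\Gal}(K,\Z/p^r(2))$ via the $d\log$ symbol for the $p$-part, followed by the invariant isomorphism $H^3(K)\xrightarrow{\sim}\QZ$ of the $2$-dimensional local field $K$; concretely, $\rho_K(a)\in G_K^{\ab}$ is the unique element with $\chi(\rho_K(a))=\langle\chi,a\rangle$ for every $\chi\in H^1(K)=(G_K^{\ab})^{\vee}$. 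Since $G_K^{\ab}$ is separated by its continuous finite-order characters, I would verify each of (i)--(iii) after composing both sides with an arbitrary such $\chi$, which turns every diagram into an identity of pairings valued in $\QZ$.

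For (ii) and (iii) the mechanism is uniform. The norm $N_{L/K}$ on $K_2$ corresponds to the corestriction $\Cor_{L/K}$ on $H^2$ and the inclusion-induced map $i_{L/K}$ corresponds to the restriction $\operatorname{res}_{L/K}$ on $H^2$; the invariant isomorphisms are compatible with corestriction, $\operatorname{inv}_K\circ\Cor_{L/K}=\operatorname{inv}_L$ on $H^3$; and one has the projection formula $\Cor_{L/K}(\operatorname{res}_{L/K}\xi\cup\eta)=\xi\cup\Cor_{L/K}(\eta)$. Together with the standard facts that the Galois-group map $G_L^{\ab}\to G_K^{\ab}$ appearing in (ii) is dual to $\operatorname{res}_{L/K}$ on $H^1$ and that the transfer $\Ver_{L/K}$ is dual to $\Cor_{L/K}$ on $H^1$ (\cite{NSW}), a one-line diagram chase in each case pairs both sides with $\chi$ (resp.\ with $\psi\in H^1(L)$) and reduces the identity to the projection formula plus the invariant-isomorphism compatibility; no further arithmetic input is needed.

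For (i) the new ingredient is the residue (localization) map of $v_K$. The tame symbol $\d_K:K_2(K)\to k^{\times}$ of \eqref{eq:tame_symbol} is, via the Galois symbol, the residue map $H^2_{\Gal}(K,\Z/m(2))\to H^1_{\Gal}(k,\Z/m(1))=k^{\times}/m$ (and its de Rham--Witt $p$-analogue), while the right vertical map $G_K^{\ab}\to G_k^{\ab}$ is dual to the inflation $H^1(k)\inj H^1(K)$. I would then invoke the commutative square comparing the cup-product pairing for $K$ with the local pairing $H^1(k)\times k^{\times}\to\operatorname{Br}(k)\cong\QZ$ of the residue field $k$ (itself a local field): the two vertical comparisons are inflation on characters and the tame symbol on $K_2$, and the comparison of target groups is the residue $H^3(K)\to H^2(k)=\operatorname{Br}(k)$, which is compatible with $\operatorname{inv}_K$ and $\operatorname{inv}_k$. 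Granting this square, pairing (i) with an arbitrary $\chi\in H^1(k)$ reduces it to the defining property $\chi(\rho_k(\bar a))=\operatorname{inv}_k(\chi\cup\bar a)$ of the classical reciprocity map $\rho_k$, i.e.\ to one-dimensional local class field theory.

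The genuine obstacle --- the reason this is Kato's theorem and not a formal exercise --- is the $p$-primary part, since $\Char(K)=p$. There every input used above (the symbol map being an isomorphism onto the relevant $H^2$, the invariant isomorphism $H^3(K)\cong\QZ$, the projection formula, and the residue long exact sequence for $v_K$) has to be set up with the logarithmic de Rham--Witt sheaves $W_r\Omega^n_{\log}$ in place of $\mu_m^{\otimes n}$; this is exactly what is done in \cite{Kat80} (resting on \cite{Kat82b} and on the properties of the de Rham--Witt complex from \cite{Ill79}), whereas the prime-to-$p$ part is Tate's classical \'etale-cohomological computation. Once those duality and compatibility statements are in hand, the reductions above carry (i)--(iii) through with only bookkeeping, and the three diagrams commute.
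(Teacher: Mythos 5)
The paper offers no proof of this proposition: it is quoted verbatim from Kato (\cite{Kat80}, Sect.\ 3.2, Cor.\ 1 and 2) and Raskind (\cite{Ras95}, Sect.\ 2.1), so there is no in-paper argument to compare against. Your outline is, however, a faithful reconstruction of how those sources actually prove it: $\rho_K$ is defined by the pairing $H^1(K)\times K_2(K)\to H^3(K)\simeq \QZ$, points of the profinite group $G_K^{\ab}$ are separated by $H^1(K)=(G_K^{\ab})^{\vee}$, and then (ii) and (iii) reduce to the projection formula together with $\operatorname{inv}_K\circ\Cor_{L/K}=\operatorname{inv}_L$ and the duality of $\Res/\Ver$ with restriction/corestriction on $H^1$, while (i) reduces to the compatibility of the tame symbol with the cohomological residue map and of the residue $H^3(K)\to H^2(k)$ with the two invariant isomorphisms. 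The one caution is that your argument imports, rather than proves, exactly the statements that carry the real weight: in $\Char(K)=p$ the identification $K_2(K)/p^r\simeq H^0_{\Gal}(K,W_r\Omega^2_{\log})$ (Bloch--Gabber--Kato), the compatibility of the Milnor norm with corestriction on these groups, and the existence of the invariant isomorphism $H^3(K)\simeq\QZ$ are themselves the substance of \cite{Kat80}, \cite{Kat82b}. Since the proposition is itself a citation, deferring those inputs is acceptable here, but your write-up should make explicit that the "one-line diagram chases" are conditional on them; as an outline of the cited proof it is correct.
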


The multiplicative group $K^{\times}$ and the Milnor $K$-group 
$K_2(K)$ 
have good topologies 
(introduced in \cite{Kat79}, Sect.\ 7, see also \cite{Ras95}, Sect.\ 
2.3) and this makes $\rho_K$ continuous. 
We omit the detailed exposition on the definitions of these topologies. 
However, under the topologies, the following properties hold: 
\begin{enumerate}[label=$\mathrm{(\alph*)}$]
\item The unit group $U_K = \OKt$ is open in $\Kt$. 
\item The topology  on $K_2(K)$ is given by the strongest topology for the so called \textbf{symbol map} 
 $K^{\times} \times K^{\times} \to K_2(K); f\otimes g \mapsto \set{f,g}$
is continuous.  
\item For a finite extension $L/K$, the norm map (\cite{Kat80}, Sect.\ 1.7)  
$N_{L/K}:K_2(L) \to K_2(K)$
is continuous.
\end{enumerate}
Note also that 
any continuous homomorphism 
$K_2(K) \to \QZ$ is automatically of finite order 
with respect to this topology 
(\cite{Kat80}, Sect.~3.5, Rem.\ 4). 
%
Recall that an element $\chi\in H^1(K) = (G_K^{\ab})^{\vee}$ \eqref{eq:GFv}  
is said to be \textbf{unramified}   
if the corresponding cyclic extension of $K$  
is unramified. 
%
\begin{thm}[\cite{Kat80}, Sect.~3.1, 3.5; \cite{Sai85}, Chap.\ I, 
Thm.~3.1] 
	\label{thm:lcft}
	The reciprocity map $\rho_K$ satisfies the following: 
 	\begin{enumerate}
 	\item 
 	The map $\rho_K$ induces an isomorphism 
 	$\rho_K^{\vee}: H^1(K) \isomto K_2(K)^{\vee}$. 
%
 	\item
	An element $\chi \in H^1(K)$ is unramified  
	if and only if $\rho_K^{\vee}(\chi)$ 
	annihilates $U^0K_2(K) := \Ker(\partial_K)$.
	\end{enumerate} 
\end{thm}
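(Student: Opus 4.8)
The statement is $2$-dimensional local class field theory, and the plan is to deduce it from the cohomological duality of the $2$-dimensional local field $K$ together with the known $1$-dimensional class field theory of its residue field $k$. First I would reduce assertion (i) to a finite-level statement. A continuous homomorphism $K_2(K)\to\QZ$ has finite order, hence factors through $K_2(K)/m$ for some $m\in\Z_{\ge 1}$; passing to the colimit over $m$, the bijectivity of $\rho_K^{\vee}$ becomes the statement that, for every $m$, the pairing between the $m$-torsion subgroup $H^1(K)[m]$ and $K_2(K)/m$,
\[
 H^1(K)[m]\times K_2(K)/m\longrightarrow\QZ,\qquad(\chi,\xi)\longmapsto\chi(\rho_K(\xi)),
\]
is perfect (non-degeneracy in $\chi$ says that $\Im(\rho_K)$ is dense in $G_K^{\ab}$, non-degeneracy in $\xi$ that every element of $K_2(K)^{\vee}$ is of the form $\chi\circ\rho_K$). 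Writing $m=m'p^r$ with $m'\in\Np$, this splits into a prime-to-$p$ part and a $p$-part, handled via the two summands in the definition of $H^n(K)$.

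For the prime-to-$p$ part I would identify the pairing with a cup product. The Galois symbol gives an isomorphism $K_2(K)/m'\isomto H^2_{\Gal}(K,\Z/m'(2))$ (a case of the Merkurjev--Suslin theorem, established directly by Kato for fields of this type), under which, by the construction of $\rho_K$, the pairing above becomes the cup-product pairing
\[
 H^1_{\Gal}(K,\Z/m'(0))\times H^2_{\Gal}(K,\Z/m'(2))\longrightarrow H^3_{\Gal}(K,\Z/m'(2))\simeq\Z/m'.
\]
Its perfectness, together with the identification $H^3_{\Gal}(K,\Z/m'(2))\simeq\Z/m'$, is Kato's duality theorem for $2$-dimensional local fields; I would prove it by d\'evissage, using the localization (Gysin) sequences relating the \'etale cohomology of $K$, of $\OK$ and of $k$, and the base-change isomorphism $H^{\bullet}(\OK,-)\simeq H^{\bullet}(k,-)$, to reduce to Tate's local duality for the $1$-dimensional local field $k$.

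The $p$-part is the technical heart, and this is the step I expect to be the main obstacle. Here I would invoke the Bloch--Kato--Gabber isomorphism $K_2(K)/p^r\simeq W_r\Omega^2_{K,\log}$ and the cup product in the logarithmic de Rham--Witt coefficients $\Z/p^r(j)$, with values in $H^3(K,\Z/p^r(2))=H^1_{\Gal}(K,W_r\Omega_{\Kbar,\log}^2)\simeq\Z/p^r$, then check that the $p$-component of the $\rho_K$-pairing agrees with this cup-product pairing and that the latter is perfect. As before this goes by a residue d\'evissage down to the characteristic $p$ local field $k$, where the input is Artin--Schreier--Witt theory and the duality for $W_r\Omega_{k,\log}^{\bullet}$. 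Getting the de Rham--Witt formalism, the Gysin sequences, and the comparison with the symbol map to cooperate is the real work here; this is why (i) is attributed to Kato (and S.~Saito) rather than reproved.

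Finally, I would deduce (ii) from (i) and the compatibility square of Proposition~\ref{prop:rhoK}(i). Dualizing that square gives $\rho_K^{\vee}\circ\mathrm{inf}=(\partial_K)^{\vee}\circ\rho_k^{\vee}$, where $\mathrm{inf}\colon H^1(k)\hookrightarrow H^1(K)$ is the inflation attached to the surjection $G_K^{\ab}\twoheadrightarrow G_k^{\ab}$ and $(\partial_K)^{\vee}$ is precomposition with $\partial_K$. Recall that $\chi\in H^1(K)$ is unramified precisely when $\chi$ lies in the image of $\mathrm{inf}$. If $\chi=\mathrm{inf}(\psi)$ with $\psi\in H^1(k)$, then $\rho_K^{\vee}(\chi)=\rho_k^{\vee}(\psi)\circ\partial_K$ annihilates $U^0K_2(K)=\Ker(\partial_K)$. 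Conversely, if $\rho_K^{\vee}(\chi)$ annihilates $U^0K_2(K)$ it factors as $\phi\circ\partial_K$ through the isomorphism $K_2(K)/U^0K_2(K)\isomto\kt$ induced by $\partial_K$ (surjectivity of $\partial_K$ follows from $\partial_K(\set{u,\pi})=\ol{u}$ for $u\in\OKt$ and $\pi$ a uniformizer); by the $1$-dimensional reciprocity $\rho_k^{\vee}\colon H^1(k)\isomto(\kt)^{\vee}$ one has $\phi=\rho_k^{\vee}(\psi)$ for some $\psi\in H^1(k)$, so $\rho_K^{\vee}(\chi)=\rho_K^{\vee}(\mathrm{inf}(\psi))$, and the injectivity of $\rho_K^{\vee}$ from (i) gives $\chi=\mathrm{inf}(\psi)$, which is unramified.
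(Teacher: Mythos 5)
The paper does not prove this theorem: it is imported verbatim from Kato (\cite{Kat80}, Sect.~3.1, 3.5) and Saito (\cite{Sai85}, Chap.~I, Thm.~3.1), so there is no in-paper argument to compare against. Your outline of (i) is the strategy of those sources: reduce to the finite-level dualities $H^1(K,\Z/m)\simeq (K_2(K)/m)^{\vee}$, identify the prime-to-$p$ pairing with the cup product $H^1(K,\Z/m'(0))\times H^2(K,\Z/m'(2))\to H^3(K,\Z/m'(2))\simeq \Z/m'$ via the Galois symbol, and handle the $p$-part through logarithmic de Rham--Witt coefficients, with a d\'evissage to Tate duality (resp.\ Artin--Schreier--Witt duality) over the residue field $k$. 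As you concede, the $p$-primary case is only gestured at, and that is where all the content of Kato's theorem lives (one also needs the topology on $K_2(K)$ to even state perfectness there, since $K_2(K)/p^r$ and $H^1(K,\Z/p^r)$ are infinite); so (i) remains a citation, not a proof. Your derivation of (ii) from (i), by contrast, is complete and correct: the square of Prop.~\ref{prop:rhoK}(i) dualizes to $\rho_K^{\vee}\circ\mathrm{inf}=\partial_K^{\vee}\circ\rho_k^{\vee}$, the computation $\partial_K(\set{u,\pi})=\ol{u}$ gives $K_2(K)/U^0K_2(K)\isomto \kt$, and the converse direction uses surjectivity of $\rho_k^{\vee}$ together with the injectivity of $\rho_K^{\vee}$ from (i). This is exactly how (ii) is obtained in the cited references, and it is the one part of the statement that genuinely reduces to (i) plus the compatibility with the boundary map.
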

We denote by $I_K$ the ineria subgroup of $G_K^{\ab}$ 
which is defined by the kernel of the restriction $G_K^{\ab} \to G_k^{\ab}$. 
For any $m\in \Z_{\ge 1}$, 
the reciprocity map $\rho_K$ induces 
$\rho_{K,m}:K_2(K)/m \to G_K^{\ab}/m$. 
Thm.\ \ref{thm:lcft} (i) implies that the dual of this homomorphism  
\begin{equation}
\label{eq:rhoKmv}
\xymatrix@C=5mm{
  \rho_{K,m}^{\vee}:(G_K^{\ab}/m)^{\vee} = H^1(K,\Z/m) \ar[r]^-{\simeq} &  
  (K_2(K)/m)^{\vee}
}
\end{equation}
is bijective. 
The following theorem says that $\rho_{K,m}$ is injective 
for each $m \in \Z_{\ge 1}$. 
%
%
\begin{thm}[\cite{Fes01}, Thm.\ 4.5, see also \cite{Fes00}, Thm.\ 2]
\label{thm:Fesenko}
$\Ker(\rho_K)$ 
is divisible.  
\end{thm}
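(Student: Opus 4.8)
The plan is to reduce the statement, by means of Kato's duality theorem (Thm.~\ref{thm:lcft}~(i)), to an intrinsic assertion about $K_2(K)$ and its topology, and then to prove that assertion by an explicit analysis of the topology, following \cite{Fes00} and \cite{Fes01}. More precisely, I would first identify $\Ker(\rho_K)$ with the closure $\Lambda$ of $\set{0}$ in $K_2(K)$ (for the topology of \cite{Kat79}, Sect.~7), and then show that $\Lambda$ is divisible; the first step is essentially formal, while the second is the main obstacle.

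For the reduction: since $G_K^{\ab}$ is profinite, its finite-order characters separate its points, so for $x\in K_2(K)$ one has $\rho_K(x)=0$ if and only if $\psi(\rho_K(x))=0$ for every $\psi\in H^1(K)=(G_K^{\ab})^{\vee}$, i.e.\ if and only if $(\psi\circ\rho_K)(x)=0$ for all such $\psi$. By Thm.~\ref{thm:lcft}~(i), the dual map $\rho_K^{\vee}\colon H^1(K)\to K_2(K)^{\vee}$, $\psi\mapsto\psi\circ\rho_K$, is surjective, and by \cite{Kat80}, Sect.~3.5, Rem.~4 every continuous homomorphism $K_2(K)\to\QZ$ is of finite order; hence every $\chi\in K_2(K)^{\vee}$ is of the form $\psi\circ\rho_K$, and we get
\[
\Ker(\rho_K)=\bigcap_{\chi\in K_2(K)^{\vee}}\Ker(\chi),
\]
an intersection of open subgroups of finite index in $K_2(K)$. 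By the structure theory of the topological Milnor $K$-group $K_2(K)/\Lambda$ developed in \cite{Kat80} and \cite{Fes00}, this intersection equals $\Lambda$, so the theorem amounts to the divisibility of $\Lambda$.

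Divisibility of $\Lambda$ is \cite{Fes01}, Thm.~4.5 (see also \cite{Fes00}, Thm.~2), and it is where the real work lies. One uses Fesenko's description of $\Lambda$ via the filtration $\{U^{(i)}K_2(K)\}_{i\ge 1}$ entering the definition of the topology, and must produce, for each prime $l$ and each $x\in\Lambda$, some $y\in\Lambda$ with $ly=x$. For $l\ne p$, the fact that $l$ is a unit in $\OK$ makes $1+\m_K^i$ uniquely $l$-divisible with $l$-th roots of unchanged valuation, so a presentation of $x$ modulo $U^{(li)}K_2(K)$ as a sum of symbols $\set{1+a,f}$ with $\vK(a)\ge li$ can be divided term by term; the difficulty is to do this compatibly as $i$ varies, which requires a sequential-limit argument inside $K_2(K)$. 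For $l=p$, where $1+\m_K^i$ is no longer $p$-divisible, one instead exploits the identity $\set{(1+a)^p,f}=p\,\set{1+a,f}$ together with a finer decomposition of $U^{(pi)}K_2(K)$ modulo $p$-th powers; this delicate computation of \cite{Fes01} completes the proof.
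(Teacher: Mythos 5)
The paper does not prove this theorem: it is imported verbatim from Fesenko (\cite{Fes01}, Thm.~4.5; \cite{Fes00}, Thm.~2), so there is no internal proof to compare yours against. Your reduction is correct as far as it goes: since $G_K^{\ab}$ is profinite its finite-order continuous characters separate points, and combining Thm.~\ref{thm:lcft}~(i) with the automatic finiteness of the order of any continuous character of $K_2(K)$ does give $\Ker(\rho_K)=\bigcap_{\chi\in K_2(K)^{\vee}}\Ker(\chi)$. But everything after that --- the identification of this intersection with the closure $\Lambda$ of $\set{0}$ (equivalently, that $K_2(K)/\Lambda$ has enough continuous finite-order characters to separate points), and the divisibility of $\Lambda$ --- is precisely the content of the theorem being cited, so your proposal is not an independent proof but an account of why Fesenko's result yields the statement; that is exactly what the paper itself does, by citation alone, so there is no gap relative to the paper. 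The one caveat: your phrase ``the first step is essentially formal'' undersells the equality $\bigcap_{\chi}\Ker(\chi)=\Lambda$, which is a substantive part of Fesenko's structure theory of the topological Milnor $K$-group and not a formal consequence of duality.
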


\subsection*{Ramification theory}
For $m \in \Z_{\ge 1}$, 
let $U_K^m = 1+ \m_K^m$ be the higher unit groups of $K$.
Denote by $U^mK_2(K)$ the subgroup of $K_2(K)$ 
generated by the image of $U_K^m\times K^{\times}$ in $K_2(K)$ by the symbol map. 
We also have an increasing filtration $\set{\fil_mH^q(K)}_{m \in \Z_{\ge 0}}$ 
on $H^q(K)$ (\cite{Kat89}, Def.\ 2.1) 
with $H^q(K) = \cup_{m \in \Z_{\ge 0}}\fil_mH^q(K)$. 
In particular, we have $\fil_0H^1(K) \simeq H^1(k) \oplus H^0(k)$ 
and this subgroup corresponding to tamely ramified abelian extensions of $K$ 
(\cite{Kat80}, Thm.\ 3; \cite{Kat89}, Prop.\ 6.1). 
This filtration on $H^1(K)$ induces the ramification filtration 
$\set{I^m_K}_{m \in \Z_{\ge 0}}$ on 
$G_K^{\ab}$, which is defined by 
$I_K^0 := I_K$ and 
\[
I_K^m :=   \set{\sigma \in G_K^{\ab} | \chi(\sigma) = 0 \mbox{\ for all $\chi \in \fil_{m-1}H^1(K)$} }
\]
for $m\ge 1$. 
The description of $\fil_0H^1(K)$ implies that 
$I_K^m \subset I_K = I_K^0$ for $m\ge 1$ and 
$I_K^1$ is the wild inertia subgroup of $G_K^{\ab}$, that is, 
the maximal pro-$p$ subgroup of the inertia subgroup $I_K$. 
  
\begin{prop}[\cite{Kat89}, Prop. 6.5, see also Rem.\ 6.6]
\label{prop:Kato}	
For $\chi \in H^1(K)$, 
$\chi$ is in $\fil_mH^1(K)$ if and only if 
$\rho_K^{\vee}(\chi) \in K_2(K)^{\vee}$ 
annihilates $U^{m+1}K_2(K)$.
\end{prop}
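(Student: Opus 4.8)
This is \cite{Kat89}, Prop.\ 6.5 (up to a shift of indices, cf.\ \opcit, Rem.\ 6.6), and the plan is to reduce the statement to a perfect residue pairing over the residue field $k$. By Thm.\ \ref{thm:lcft} (i) the map $\chi\mapsto\chi\circ\rho_K$ identifies $H^1(K)$ with the group $K_2(K)^{\vee}$ of continuous homomorphisms $K_2(K)\to\QZ$ (all of finite order, \cite{Kat80}, Sect.\ 3.5, Rem.\ 4). Hence the condition that $\rho_K^{\vee}(\chi)$ annihilate $U^{m+1}K_2(K)$ says exactly that $\chi$ lies in the annihilator $(U^{m+1}K_2(K))^{\perp}\subset H^1(K)$, and the claim becomes the equality of subgroups
\[
  \fil_mH^1(K)=(U^{m+1}K_2(K))^{\perp};
\]
passing to annihilators in $G_K^{\ab}$ and using the definition $I_K^{m+1}=(\fil_mH^1(K))^{\perp}$, this is equivalent to $\ol{\rho_K(U^{m+1}K_2(K))}=I_K^{m+1}$ for every $m\ge 0$. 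I would prove the two inclusions separately.

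The inclusion $\fil_mH^1(K)\subset(U^{m+1}K_2(K))^{\perp}$ — equivalently $\rho_K(U^{m+1}K_2(K))\subset I_K^{m+1}$ — I would obtain by d\'evissage from the case $m=0$, where it reads $\rho_K(U^1K_2(K))\subset I_K^1$ (the wild inertia), using the norm functoriality of $\rho_K$ (Prop.\ \ref{prop:rhoK} (ii)) and the behaviour of both filtrations under finite extensions; alternatively it follows from a residue computation of $\rho_K$ on the generators $\{1+at^{m+1},b\}$ of $U^{m+1}K_2(K)$ ($t$ a uniformizer of $K$), since a character in $\fil_mH^1(K)$ is represented by a (Witt vector of) differential form(s) with pole of order $\le m$ along the valuation, which has no residue against such a symbol.

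Granting this inclusion at all levels, the pairing $(\chi,a)\mapsto\chi(\rho_K(a))$ descends to pairings on the graded quotients
\[
  \mathrm{gr}_m\fil_{\bullet}H^1(K)\ \times\ U^mK_2(K)/U^{m+1}K_2(K)\ \longrightarrow\ \QZ\qquad(m\ge 1),
\]
and for the reverse inclusion it suffices to show that each of these is non-degenerate in the first variable: if some $\chi\in(U^{m+1}K_2(K))^{\perp}$ had Swan conductor $m'>m$, it would annihilate $U^{m'}K_2(K)\subset U^{m+1}K_2(K)$, so its class in $\mathrm{gr}_{m'}\fil_{\bullet}H^1(K)$ would lie in the left kernel, forcing $\chi\in\fil_{m'-1}H^1(K)$, contradicting the value of the conductor. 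For the non-degeneracy I would invoke the explicit description of the two graded quotients in terms of the differential forms $\Omega^1_k$, $\Omega^0_k$ of the residue field $k$: the refined Swan conductor realizes $\mathrm{gr}_m\fil_{\bullet}H^1(K)$ as a subgroup of $\Omega^1_k$ when $p\nmid m$ and of $\Omega^1_k\oplus\Omega^0_k$ when $p\mid m$ (via Kummer theory for the prime-to-$p$ part and Artin--Schreier--Witt theory for the $p$-part), while $U^mK_2(K)/U^{m+1}K_2(K)$ is generated by the classes of the symbols $\{1+at^m,b\}$, mapping to $a\,\frac{db}{b}$ (together with $a\,\frac{dt}{t}\wedge(\,\cdot\,)$ in the wild case); under these identifications the reciprocity pairing becomes a residue pairing valued in $\Fp\subset\QZ$, which is perfect because $k$ is itself a local field. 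This gives the non-degeneracy, hence the equality, for every $m$.

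The main obstacle is this last step: the explicit computation of the graded quotients of both filtrations in terms of $\Omega^{\bullet}_k$, and the verification that the reciprocity pairing becomes the residue pairing on them. Within it, the $p$-primary part in characteristic $p$ is the delicate case, requiring the de Rham--Witt description of $H^1(K,\Z/p^n)$, the explicit Artin--Schreier--Witt symbol, and a careful separation of the regimes $p\nmid m$ and $p\mid m$ (the source of the two components of the refined Swan conductor when $p\mid m$); reconciling Kato's indexing of $\fil_m$ with the $U^{m+1}K_2(K)$ occurring here is precisely the bookkeeping addressed in \cite{Kat89}, Rem.\ 6.6.
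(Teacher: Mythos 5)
The paper does not prove this proposition at all: it is quoted verbatim from \cite{Kat89}, Prop.\ 6.5 and Rem.\ 6.6, and used as a black box (its only role here is to make $\rho_K$ induce maps $U^mK_2(K)\to I_K^m$ and to define $\piabXD$). So there is no in-paper argument to compare yours against; what you have written is an outline of Kato's own proof.

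As an outline it has the right architecture. The reduction via Thm.\ \ref{thm:lcft} (i) and Pontrjagin duality of the statement to the equality $\ol{\rho_K(U^{m+1}K_2(K))}=I_K^{m+1}$ is correct (the double-annihilator step is legitimate because $H^1(K)$ is discrete torsion and $G_K^{\ab}$ profinite), the easy inclusion by d\'evissage/norm functoriality from the tame case $m=0$ is standard, and the converse via non-degeneracy in the first variable of the induced pairing on graded quotients, detected by the refined Swan conductor and a residue pairing into $\Omega^1_k$, is exactly Kato's mechanism. Two caveats. First, the load-bearing steps --- the identification of $\fil_m/\fil_{m-1}$ with (sub)groups of differential forms of $k$, the explicit Artin--Schreier--Witt computation of the pairing on generators $\set{1+at^{m},b}$, and the perfectness of the resulting residue pairing using that $k$ is itself a local field --- are precisely the content of \cite{Kat89}, Sect.\ 5--6, and you defer them entirely; as written the proposal is a roadmap, not a proof. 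Second, a small imprecision: for $m\ge 1$ the graded piece $\fil_mH^1(K)/\fil_{m-1}H^1(K)$ is purely $p$-primary (tame characters already lie in $\fil_0$), so the dichotomy $p\nmid m$ versus $p\mid m$ concerns the jump index and the shape of the refined Swan conductor, not a Kummer-theoretic ``prime-to-$p$ part'' of the character; the prime-to-$p$ Kummer computation only enters at level $m=0$.
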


From Prop.\ \ref{prop:Kato}, $\rho_K$ induces 
$U^{m}K_2(K) \to  I_K^{m}$ 
%
for $m\in \Z_{\ge 0}$.
In our case of $\Char(K) = p$, 
it is known  
$I_K^{m+1} = G_{K,\mathrm{log}}^{\ab, m+}$ 
for any $m\in \Z_{\ge 0}$, 
where the right is the induced group from 
Abbes-Saito's 
logarithmic version of ramification subgroups on  
the absolute Galois group $G_K = \Gal(\Kbar/K)$ 
(\cite{AS02}, see also \cite{AS09}, Cor.\ 9.12). 

\section{Curves over local fields}
\label{sec:class groups}
Let $k$ be a local field of $\Char(k) = p$. 
%

\begin{dfn}
\label{def:open_curve}
We call the pair $X \subset \Xbar$ of 
\begin{itemize} 
\item $\Xbar$\,:  a smooth, proper and geometrically connected curve over $k$,  and 
\item $X$\,: a nonempty open subscheme of $\Xbar$
\end{itemize}
an \textbf{open curve} over $k$.
\end{dfn}

\noindent
Since 
the smooth compactification $\Xbar$ of a smooth curve $X$ is unique 
if it exists by the valuative criterion of properness, 
we often omit $\Xbar$ 
and write $X$ solely as an open curve in the above sense.

For an open curve $X$ over $k$, 
we also define 
\begin{itemize} 
\item $X_{\infty} := \Xbar \ssm X$, 
\item $X_0$\,: the set of closed points in $X$, and 
\item $\kX$\,: the function field of $X$.
\end{itemize}

\sn
For a closed point $x\in \Xbar_0$, 
we denote by 
\begin{itemize} 
\item $\kx:$ the residue field at $x$ 
which is a finite extension of $k$, and 
\item $\kXx:$ the completion of $\kX$ at $x$ 
which is a 2-dimensional local field (Def.\ \ref{def:2dim}) 
with residue field $\kx$.
\end{itemize}

\subsection*{Id\`ele Class Groups}
We fix an open curve $X$ over $k$
and introduce the id\`ele class group and the reciprocity map 
for $X$.   

\begin{dfn}
\label{def:CX}
The {\bf id\`ele class group} $\CX$ 
is defined to be the cokernel of 
\[
\xymatrix@C=5mm{
  \d:K_2(\kX) \ar[r] & \displaystyle{\bigoplus_{x\in X_0} \kxt \oplus \bigoplus_{x\in 
  \Xinf}K_2(\kXx) }
}
\]
which is given by 
the direct sum of the following homomorphisms:  
\begin{itemize}[labelindent=\parindent,leftmargin=*]
\item  
the boundary map \eqref{eq:tame_symbol}  
$\d_x := \d_{\kXx} :K_2(\kXx) \to \kxt$ 
for $x \in X_0$, and 

\item $K_2(\kX) \to  K_2(\kXx)$ 
induced by  
the inclusion $i_x:\kX \inj \kXx$ 
for $x \in \Xinf$, and 
\end{itemize} 
\end{dfn}

The restricted product 
$\restprod_{x\in \Xbar_0} K_2(\kXx)$
with respect to 
the closed subgroup $U^0K_2(\kXx) = \ker(\d_x)$
has a structure of a topological group induced from 
the topology on $K_2(\kXx)$ (\Cf Sect.\ \ref{sec:local}) 
as in the classical class field theory (\Cf \cite{Sai85}, Chap.\ I, Sect.\ 3). 
The id\`ele class group  $\CX$ is 
a quotient of $\restprod_{x\in \Xbar_0} K_2(\kXx)$ 
and is endowed with the quotient topology.

The abelian fundamental group $\piabX$ 
has a description as a Galois group: 
we have $\piabX \simeq \Gal(\kX^{\ur}/\kX)$, 
where $\kX^{\ur}$ is 
generated by all finite separable extensions $E$ of $\kX$ contained in $\kX^{\ab}$ 
satisfying that  
the normalization $\widetilde{X}^E \to X$ of $X$ in $E$ is unramified (\Cf \cite{SGA1} Exp.\ V, 8.2). 
In particular, we have $\kX^{\ur} \subset \kX^{\ab}$ so that 
the restriction gives 
$G_{\kX}^{\ab}=\Gal(\kX^{\ab}/\kX) \surj \piabX$.
The 2-dimensional local class field theory 
$\rho_{\kXx}: K_2(\kXx) \to G_{\kXx}^{\ab}$ 
and the restriction $G_{\kXx}^{\ab} \to G_{\kX}^{\ab}$  
induce a continuous 
homomorphism
\[
\xymatrix@C=5mm{
\displaystyle{
	\restprod_{x\in \Xbar_0} K_2(\kXx) }\ar[r]&  G_{\kX}^{\ab} \ar@{->>}[r]& \piabX.
} 
\] 
By the reciprocity law of $\kX = k(\Xbar)$ (\cite{Sai85}, 
Chap.\ II, Prop.\ 1.2) 
and the $2$-dimensional local class field theory (Thm.\ 
\ref{thm:lcft}), 
this factors through $\CX$.  

\begin{dfn}
\label{def:rhoX}
The induced continuous homomorphism 
$\rhoX : \CX \to \piabX$
%
is called the \textbf{reciprocity map} of $X$. 
\end{dfn}

We denote by 
\begin{equation}
\label{def:cok}
  \Cokertop(\rhoX) := \piabX/\ol{\Im(\rhoX)},
\end{equation}
where $\ol{\Im(\rhoX)}$ is the topological closure of $\Im(\rhoX)$. 

The norm map $N_{k(x)/k}:k(x)^{\times} \to k^{\times}$ for $x\in 
X_0$ 
and the composition $N_{k(x)/k}\circ \dx:K_2(\kXx)\to k^{\times}$ for $x \in \Xinf$ 
induce a homomorphism 
$N_X:\CX \to k^{\times}$. 
They make the following diagram commutative: 
\begin{equation}
\label{eq:VX}
\vcenter{
\xymatrix{
  0 \ar[r] & \CX^0 \ar[r]\ar[d]^{\rhoX^0}& \CX \ar[r]^-{N_X}\ar[d]^{\rhoX} & 
  k^{\times} \ar[d]^{\rho_k}\\
  0\ar[r]  & \piabX^0 \ar[r] & \piabX \ar[r]^-{\varphi} & \piab(\Spec(k)) = G_k^{\ab}\ar[r] & 0,
}}
\end{equation}
where $\varphi$ is the induced homomorphism from the structure morphism $X\to \Spec(k)$ 
(\cite{Fu11}, Sect.\ 3.3) 
and the groups $\CX^0$ and $\piabX^0$ are defined by the exactness of the 
horizontal rows. 

\subsection*{Restricted ramification}
For the open curve $X$, 
let $D = \sum_{x\in \Xinf} m_x [x]$ 
be an effective Weil divisor on $\Xbar$ 
with support $|D| \subset  \Xinf = \Xbar\ssm X$. 

\begin{dfn} 
\label{def:piabXD}
By putting $m_x = 0$ if $x \not \in |D|$, 
we define the abelian fundamental group $\piabXD$ 
with bounded ramification by  
\[
\xymatrix@C=5mm{
  \piabXD = \Coker
  \Bigg(\,\displaystyle{\bigoplus_{x\in \Xinf} I_{\kXx}^{m_x} } \ar@{^{(}->}[r] 
   & \displaystyle{\bigoplus_{x\in \Xinf} G_{\kXx}^{\ab}} \ar[r] & \piabX\bigg),
}
\]
where $I_{\kXx}^{m_x}$ is the ramification subgroup of 
$G_{\kXx}^{\ab} = \Gal(\kXx^{\ab}/\kXx)$  
(Sect.\ \ref{sec:local}). 
\end{dfn}
%
By Prop.\ \ref{prop:Kato}, 
the composite $\CX\onto{\rhoX} \piabX\surj \piabXD$ 
factors through 
\[
  \xymatrix@C=5mm{
  \CXD := \Coker\Bigg(\, \displaystyle{\bigoplus_{x\in \Xinf} U^{m_x}K_2(\kXx) }\ar[r] & \CX\Bigg)
 } 
\]
and the induced homomorphism is denoted by 
$\rhoXD:\CXD \to \piabXD$. 
Furthermore, the norm maps $N_{k(x)/k}:k(x)^{\times} \to k^{\times}$ 
define $N_{X,D}:\CXD \to k^{\times}$ and the following  diagram 
is commutative as in \eqref{eq:VX}: 
\begin{equation}
\label{eq:VXD}
	\vcenter{
	\xymatrix{
	  0 \ar[r] & \CXDg \ar[r]\ar[d]^{\rho_{X,D}^0}& \CXD 
	  \ar[r]^-{N_{X,D}}\ar[d]^{\rhoXD} & k^{\times} \ar[d]^{\rho_k}\\
	  0\ar[r]  & \piabXD^0 \ar[r] & \piabXD \ar[r] & G_k^{\ab} \ar[r] & 0.
	}}
\end{equation}
Here, 
the groups $\CXDg$ and $\piabXD^0$ are defined by the exactness of the 
horizontal rows. 
%

Consider $\Xinf = \sum_{x\in \Xinf}1[x]$ as a Weil divisor. 
Recalling that 
$I_{\kXx}^{1}$ is the wild inertia subgroup, the groups 
\begin{equation}
\label{eq:tfg}
\pitabX := \piab(X, \Xinf),\quad \mbox{and}\quad \pitabXg := 
\piab(X,\Xinf)^0
\end{equation}
classify \textbf{tame coverings} of $X$, 
that is, finite \'etale coverings over $X$ and ramify at most tamely along the boundary $\Xinf$.  
We also employ the following notation:  
\begin{equation}
\label{eq:rhot}
\vcenter{
\xymatrix@C=5mm@R=0mm{
  \rhoXt := \rho_{X,\Xinf} :\CtX := C(X,\Xinf) \ar[r] & \pitabX, &\mbox{and}\\
  \rhoXtg := \rho_{X,\Xinf}^0:\CtXg := C(X,\Xinf)^0 \ar[r] & \pitabXg.
}}
\end{equation}

\subsection*{Functorial properties}
We define the pullback and the norm homomorphism
on the id\`ele class groups with respect to \'etale coverings of open curves in the following sense. 

\begin{dfn}
\label{def:cover}
An  \textbf{\'etale covering} 
$f:Y\to X$ 
of open curves 
is defined to be the commutative diagram 
\begin{equation}
\label{eq:morphism}
\vcenter{
\xymatrix@R=5mm{
   Y\ar@{^{(}->}[r]\ar[d]_f &\Ybar \ar[d]^{\fbar} & \ar@{_{(}->}[l]\Yinf\ar[d] \\
   X\ar@{^{(}->}[r] & \Xbar   & \Xinf, \ar@{_{(}->}[l]
}}
\end{equation}
where the horizontal maps are the inclusions, $\fbar$ is a morphism of schemes over $\Spec(k)$ 
and, $f$ is a finite \'etale morphism of schemes over $\Spec(k)$. 
The right commutative square in \eqref{eq:morphism} means $\fbar(\Yinf) \subset \Xinf$.   
%
\end{dfn}
In the following, we fix 
$f:Y\to X$ an \'etale covering of open curves over $k$. 
%


\begin{dfn}
We define a canonical homomorphism 
$i_{Y/X}:= f^{\ast}:\CX \to \CY$
%
as follows: 
\begin{itemize}
\item For $x \in X_0$ and $y \in Y_0$ with $f(y) = x$,  
the inclusion $\kx \inj \ky$ gives 
$i_{\ky/\kx}: \kxt \inj \kyt$.
%

\item 
For $x \in \Xinf$, 
and $y \in \Yinf$ with $\fbar(y) = x$, 
the inclusion map $\kXx \inj \kYy$ gives 
$i_{\kYy/\kXx}: K_2(\kXx) \to K_2(\kYy)$.
%
\end{itemize}
These maps give a canonical homomorphism
\[
\xymatrix@C=5mm{
	\displaystyle{\bigoplus_{x\in X_0}\kxt \oplus \bigoplus_{x 
\in \Xinf}K_2(\kXx) }\ar[r] & \displaystyle{\bigoplus_{y \in Y_0}\kyt \oplus \bigoplus_{y\in 
\Yinf}K_2(\kYy)}. 
}
\]
Since the homomorphism $K_2(\kX) \to K_2(\kY)$ induced from $\kX \inj \kY$ 
is compatible with above homomorphisms, 
we obtain $i_{Y/X}$. 
\end{dfn}

\begin{dfn}
\label{def:norm}
We define the {\bf norm map} 
$N_{Y/X}:= f_{\ast}:\CY \to \CX$ 
as follows: 

\begin{itemize}
\item 
For $y \in Y_0$ with $x = f(y)$, 
we have the norm homomorphism 
$N_{\ky/\kx}: \kyt \to \kxt$.  
%

\item 
For $y \in \Yinf$ with $x = \fbar(y)$, 
we have the norm map 
$N_{\kYy/\kXx}: K_2(\kYy) \to K_2(\kXx)$. 
\end{itemize}
\noindent
These maps give a canonical homomorphism
\[
\xymatrix@C=5mm{
	\displaystyle{\bigoplus_{y \in Y_0}\kyt \oplus \bigoplus_{y\in 
\Yinf}K_2(\kYy) }\ar[r] & \displaystyle{\bigoplus_{x\in X_0}\kxt \oplus \bigoplus_{x 
\in \Xinf}K_2(\kXx)}. 
}
\]
Since the norm $N_{\kY/\kX}:K_2(\kY) \to K_2(\kX)$ is compatible with above norms, 
we obtain 
$N_{Y/X}$. 
\end{dfn}

\begin{lem}
\label{lem:pf}
We  have $N_{Y/X} \circ i_{Y/X} = [k(Y):k(X)] \cdot  \Id_{\CX}$,
where $\Id_{\CX}$ is the identity map of $\CX$.
\end{lem}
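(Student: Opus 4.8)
The plan is to reduce the identity $N_{Y/X}\circ i_{Y/X} = [\kY:\kX]\cdot\Id_{\CX}$ to the corresponding local statements at each point of $\Xbar$, together with the well-known ``norm $\circ$ inclusion = multiplication by degree'' identity for Milnor $K$-groups and for multiplicative groups of local fields. Since $\CX$ is by definition a quotient of $\bigoplus_{x\in X_0}\kxt \oplus \bigoplus_{x\in\Xinf}K_2(\kXx)$, and both $i_{Y/X}$ and $N_{Y/X}$ were constructed (Def.\ \ref{def:cover}--Def.\ \ref{def:norm}) as the maps induced on cokernels by compatible maps on these direct sums, it suffices to prove the identity at the level of the direct sums, i.e.\ before passing to $\CX$ and $\CY$.

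First I would fix $x\in X_0$ and consider the composite $\kxt \to \bigoplus_{f(y)=x}\kyt \to \kxt$: the first arrow is the diagonal of the inclusions $i_{\ky/\kx}$ and the second is the sum of the norms $N_{\ky/\kx}$. The standard transitivity/compatibility of norms in finite separable extensions gives that this composite is multiplication by $\sum_{f(y)=x}[\ky:\kx]$; since $f:Y\to X$ is finite \'etale, $\sum_{f(y)=x}[\ky:\kx] = \deg(f) = [\kY:\kX]$ for every $x$. Next, for $x\in\Xinf$, the analogous composite $K_2(\kXx)\to\bigoplus_{\fbar(y)=x}K_2(\kYy)\to K_2(\kXx)$ built from $i_{\kYy/\kXx}$ and $N_{\kYy/\kXx}$ equals multiplication by $\sum_{\fbar(y)=x}[\kYy:\kXx]$; again this sum over the points of $\Ybar$ above $x$ equals $[\kY:\kX]$ because $\fbar:\Ybar\to\Xbar$ is the (finite, of degree $[\kY:\kX]$) morphism of smooth proper curves compactifying $f$. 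The key inputs here are Prop.\ \ref{prop:rhoK}(ii)-type functoriality, i.e.\ the fact that $N_{\kY/\kX}$ on $K_2$ is induced, point by point, by the $N_{\kYy/\kXx}$, and the elementary formula $N_{L/K}\circ i_{L/K} = [L:K]$ on $K_2$ (and on $K^\times$) for a finite extension $L/K$.

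Assembling these, the composite on $\bigoplus_{x\in X_0}\kxt\oplus\bigoplus_{x\in\Xinf}K_2(\kXx)$ is multiplication by $[\kY:\kX]$ in every coordinate, hence is multiplication by $[\kY:\kX]$ on the whole direct sum. Since $\CX$ is the quotient of this group by $\Im(\d)$ and multiplication by an integer descends to the quotient, and since $N_{Y/X}\circ i_{Y/X}$ was defined as the map induced on these quotients by the direct-sum composite, we conclude $N_{Y/X}\circ i_{Y/X} = [\kY:\kX]\cdot\Id_{\CX}$.

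The only genuine point requiring care — the ``main obstacle,'' though it is minor — is bookkeeping the indices: one must check that summing $[\ky:\kx]$ (resp.\ $[\kYy:\kXx]$) over all points $y$ of $Y$ (resp.\ $\Ybar$) lying over a given $x$ yields exactly the global degree $[\kY:\kX]$, uniformly in $x$. For $x\in X_0$ this is the \'etale-degree formula $\sum_{f(y)=x}[k(y):k(x)] = \deg f$; for $x\in\Xinf$ it is the corresponding fact for the finite morphism $\fbar$ of smooth proper curves (the local degrees, counted with residue-degree, sum to the generic degree since $\fbar$ has no ramification contribution beyond what the residue extensions record at the $K_2$-level — more precisely, $[\kYy:\kXx] = e\cdot f$ at $y$, and $\sum e f = \deg\fbar = [\kY:\kX]$). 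Everything else is the formal verification that $i_{Y/X}$ and $N_{Y/X}$ really are the cokernel-induced maps, which is immediate from their definitions and the compatibility of $\kX\inj\kY$ and $N_{\kY/\kX}$ with the local data already recorded in Def.\ \ref{def:cover}--Def.\ \ref{def:norm}.
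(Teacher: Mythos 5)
Your proposal is correct and follows essentially the same route as the paper: the projection formula $N_{L/K}\circ i_{L/K}=[L:K]$ applied componentwise (on $\kxt$ for $x\in X_0$ and on $K_2(\kXx)$ for $x\in\Xinf$), combined with the degree formula $[\kY:\kX]=\sum_{y\mid x}[\kYy:\kXx]$ from \cite{Ser68}, and then descent to the quotient $\CX$. The extra bookkeeping you supply (checking the sum of local degrees uniformly in $x$ and the compatibility with $\Im(\d)$) is exactly what the paper leaves implicit.
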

\begin{proof}
	The projection formula of the Milnor $K$-groups (e.g., \cite{Mil71}, 
	Sect.\ 14) gives  
	\[
	  N_{\kYy/\kXx}\circ i_{\kYy/\kXx} = [\kYy:\kXx]\cdot \Id_{K_2(\kXx)}. 
	\]  
	The assertion follows from the equality
	\[
	  [\kY:\kX] = \sum_{y\in {\fbar}^{-1}(x)}[\kYy:\kXx]
	\]
	for a closed point $x\in \Xbar_0$ 
	(\cite{Ser68}, Chap.\ I, Sect.\ 4, Prop.\ 10).  
\end{proof}

From the construction of $\rhoX$ and the properties of $\rho_{K_x}$ for each $x\in \Xbar_0$ given in Prop.\ \ref{prop:rhoK},  
we obtain the following commutative diagrams: 
\begin{equation}
\label{eq:norm}
\vcenter{
\xymatrix@R=1mm{
	\CX \ar[r]^{\rhoX} & \piabX         & &	\CX \ar[r]^{\rhoX}\ar[dd]_{i_{Y/X}} & \piabX\ar[dd]^{\psi} \\
	& &                           \mbox{and} &  \\
	\CY \ar[r]^{\rho_Y}\ar[uu]^{N_{Y/X}} & \piabY\ar[uu]_{\varphi} & & \CY \ar[r]^{\rho_Y} & \piabY
}
}
\end{equation}
%
where $\varphi$ is the induced homomorphism 
of the fundamental groups from $f$ 
and $\psi$ is given by the transfer map. 

\section{Proof of Thm.\ \ref{thm:main2intro}}
\label{sec:main}
In this section, we prove Thm.\ \ref{thm:main2intro} 
using the following notation: 
\begin{itemize}
\item $k$\,: a local field of $\Char(k) = p$, and  
\item $X \subset \Xbar$\,: an open curve over $k$ in the sense of Def.\ \ref{def:open_curve}. 
\end{itemize}

\subsection*{Unramified class field theory}
We recall the class field theory for 
the projective smooth curve $\Xbar$ following \cite{Sai85} and \cite{Yos03}. 
Note that the id\`ele class groups $\CXbar$ and $\CXbarg$ are denoted by 
$SK_1(\Xbar)$ and $V(\Xbar)$ respectively in \opcit
%

\begin{thm}[\cite{Sai85}, Chap.\ II, Thm.\ 2.6, 5.1, Prop.\ 3.5, and Thm.\ 4.1;  \cite{Yos03}, Thm.\ 5.1]
\label{thm:ucft}
For the reciprocity map $\rhoXbar:\CXbar \to \piabXbar$, 
we have: 
\begin{enumerate}
%
\item $\Cokertop(\rhoXbar) \simeq \Zhat^{\rXbar}$ 
for some $\rXbar \ge 0$, 

\item $\Ker(\rhoXbar)$ and $\Ker(\rhoXbarg)$ are   
the maximal divisible subgroups of $\CXbar$ and $\CXbarg$ respectively, 

\item $\# \Im(\rhoXbar^0) < \infty$, and $\Coker(\rhoXbar^0) \simeq \Zhat^{\rXbar}$. 
\end{enumerate}
\end{thm}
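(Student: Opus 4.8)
The statement is attributed to Saito and Yoshida, so what I would present is not a new proof but rather an outline of how the cited results assemble. The key structural input is the commutative diagram \eqref{eq:VX} with $X$ replaced by $\Xbar$, whose rows are exact:
\[
\xymatrix{
  0 \ar[r] & \CXbarg \ar[r]\ar[d]^{\rhoXbarg}& \CXbar \ar[r]^-{N_{\Xbar}}\ar[d]^{\rhoXbar} & k^{\times} \ar[d]^{\rho_k}\\
  0\ar[r]  & \piabXbarg \ar[r] & \piabXbar \ar[r]^-{\varphi} & G_k^{\ab}\ar[r] & 0.
}
\]
Since $\rho_k$ is the local reciprocity map, classical local class field theory gives that $\rho_k$ is injective with dense image and $\Coker_{\mathrm{top}}(\rho_k) = 0$; more precisely $G_k^{\ab}/\ol{\Im(\rho_k)} = 0$ and $\Ker(\rho_k)$ is trivial while $\Coker(\rho_k) \cong \Zhat$ (the profinite completion versus the valuation). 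The plan is to first establish (iii) — that $\Im(\rhoXbarg)$ is finite and $\Coker(\rhoXbarg) \cong \Zhat^{\rXbar}$ — directly from Saito's analysis of the geometric part, which rests on the structure of the Néron model / the semi-abelian reduction of the Jacobian of $\Xbar$ and the definition of the rank $\rXbar$ (\cite{Sai85}, Def.\ 2.5). Then (i) follows by the snake lemma applied to the diagram above: the cokernel of $\rho_k$ being $\Zhat$ and the cokernel of $\rhoXbarg$ being $\Zhat^{\rXbar}$ combine (using that $\varphi$ is surjective and that one must pass to topological cokernels carefully) to give $\Cokertop(\rhoXbar) \cong \Zhat^{\rXbar}$; here one uses that the finite image of $\rhoXbarg$ does not contribute after taking closures.

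For (ii), the divisibility of $\Ker(\rhoXbar)$ and $\Ker(\rhoXbarg)$, I would argue as follows. Again by the snake lemma on the displayed diagram, $\Ker(\rhoXbar)$ sits in an extension of a subgroup of $\Ker(\rho_k) = 0$ by $\Ker(\rhoXbarg)$, so the two kernel statements are essentially equivalent and it suffices to treat $\rhoXbarg$. That $\Ker(\rhoXbarg)$ is divisible is exactly \cite{Sai85}, Chap.\ II, combined with \cite{Yos03}, Thm.\ 5.1, which removes a finiteness hypothesis present in Saito's original treatment; the input is that $\CXbarg$ modulo its maximal divisible subgroup injects into $\piabXbarg$, which in turn uses the finiteness of $\CXbarg/n$ (a Bloch–Kato–Saito style finiteness theorem for $SK_1$ of the curve) together with the fact that $\rhoXbarg$ becomes an isomorphism on these finite quotients. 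That $\Ker(\rhoXbarg)$ is the \emph{maximal} divisible subgroup, not merely a divisible one, then follows because the quotient $\CXbarg/\Ker(\rhoXbarg) = \Im(\rhoXbarg)$ is finite, hence has no nontrivial divisible subgroup.

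The main obstacle — and the only genuinely hard point — is the finiteness of $\CXbarg$ and its $n$-torsion quotients, i.e.\ the claim $\#\Im(\rhoXbarg) < \infty$; everything else is bookkeeping with the snake lemma and local class field theory. This finiteness is the arithmetic heart of \cite{Sai85} (the class field theory for curves over local fields in the proper case) and \cite{Yos03}, resting on a comparison of $SK_1$ with étale cohomology via a weight argument / the structure of the special fibre, and I would simply cite it rather than reprove it. I would also flag the subtlety that $\CXbar$ need not be locally compact, so "cokernel" must mean the topological (Hausdorff) cokernel throughout, and the identification $\Coker(\rhoXbarg) \cong \Zhat^{\rXbar}$ already incorporates the passage to the closure.
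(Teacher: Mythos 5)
The paper offers no proof of Theorem \ref{thm:ucft}: it is quoted verbatim from \cite{Sai85} and \cite{Yos03}, so there is no in-paper argument to compare yours against, and your decision to cite the two genuinely hard inputs --- the finiteness $\#\Im(\rhoXbarg)<\infty$ and the divisibility of $\Ker(\rhoXbarg)$ --- rather than reprove them is exactly what the paper does. Your surrounding bookkeeping is sound: the snake lemma applied to the diagram \eqref{eq:VX} with $X=\Xbar$ does identify $\Ker(\rhoXbar)$ with $\Ker(\rhoXbarg)$ (since $\Ker(\rho_k)=0$, and since $k^{\times}$ has trivial maximal divisible subgroup the maximal divisible subgroup of $\CXbar$ indeed lands in $\CXbarg$), and maximality of the divisible kernel follows, as you say, because $\Im(\rhoXbarg)$ is finite and hence contains no nontrivial divisible subgroup. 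One small correction: for a local field $k$ one has $\Coker(\rho_k)\simeq \Zhat/\Z$, not $\Zhat$; this is harmless since only $\Cokertop(\rho_k)=0$ enters your argument, but the parenthetical as written is wrong.
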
 
\noindent
Here, the invariant $\rXbar$ is determined by 
the special fiber of the N\'eron model of the Jacobian variety of 
$\Xbar$ 
which is called the \textbf{rank} of $\Xbar$ 
(\cite{Sai85}, Chap.\ II, Def.\ 2.5, see also \opcit, 
Chap.\ II, Thm.\ 6.2). 

Thm.\ \ref{thm:ucft} gives  
the 
structure of the fundamental group $\piabXbarg$ as in the following short exact sequence:  
\begin{equation}
\label{eq:rho0}
\xymatrix@C=5mm{
	0 \ar[r] & \piabXbarg_{\tor}= \Im(\rhoXbarg) \ar[r] & \piabXbarg \ar[r] & \Zhat^{\rXbar} \ar[r] & 0,
}
\end{equation}
where 
$\piabXbarg_{\tor}$ is the torsion part of $\piabXbarg$ 
which is finite. 
%
%
%

\subsection*{Tame fundamental groups}
The goal of this paragraph is 
to determine 
the structure of the abelian tame fundamental group 
 $\pitabX = \piab(X, \Xinf)$ \eqref{eq:tfg} as in \eqref{eq:rho0}. 
%
%

\begin{thm}
\label{thm:coker}
$\Cokertop(\rhoX) \simeq \Zhat^{\rXbar}$,  
where $\rXbar$ is the rank of $\Xbar$. 
\end{thm}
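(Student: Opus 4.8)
The plan is to reduce the statement for the open curve $X$ to the known computation for the compactification $\overline{X}$ in Theorem~\ref{thm:ucft}(i). The natural tool is the open immersion $j : X \hookrightarrow \overline{X}$, which induces a surjection $\piabX \surj \piabXbar$ on abelian fundamental groups (every étale covering of $\overline{X}$ restricts to one of $X$), and on the idèle side a homomorphism $\CX \to \CXbar$ coming from the boundary maps $\partial_x : K_2(k(X)_x) \to k(x)^\times$ at the points $x \in \Xinf$ together with the identity at points of $X_0$. These fit into a commutative square with the reciprocity maps $\rho_X$ and $\rho_{\overline{X}}$, so $\ol{\Im(\rho_X)}$ maps into $\ol{\Im(\rho_{\overline X})}$ and we get an induced surjection $\Cokertop(\rho_X) \surj \Cokertop(\rho_{\overline X}) \simeq \Zhat^{\rXbar}$.

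First I would show this induced surjection is actually an isomorphism. The key point is that the natural map $\CX \to \CXbar$ has image with dense (in fact everything modulo the part already killed) contribution: concretely, the map $\bigoplus_{x\in X_0}\kxt \to \CXbar$ factoring through $\CX$ is, up to the extra summands $\bigoplus_{x\in\Xinf}K_2(k(X)_x)$, the same generating family of $\CXbar$, so the composite $\CX \to \CXbar \to \piabXbar$ has the same closed image as $\rho_{\overline X}$ itself. Hence the kernel of $\piabX \surj \piabXbar$ is carried into $\ol{\Im(\rho_X)}$: an element of $\Ker(\piabX \to \piabXbar)$ comes from ramification concentrated at the boundary points, i.e.\ from $\bigoplus_{x\in\Xinf} I_{k(X)_x}$, and via $\rho_{k(X)_x}$ (Thm.~\ref{thm:lcft}, Prop.~\ref{prop:Kato}) the full inertia $I_{k(X)_x} = \rho_{k(X)_x}(U^0 K_2(k(X)_x))$ lies in the image of $\rho_X$ coming from the corresponding local factor $K_2(k(X)_x) \subset \bigoplus_{x\in\Xinf}K_2(k(X)_x)$. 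Therefore $\ol{\Im(\rho_X)}$ surjects onto $\Ker(\piabX \to \piabXbar)$ and $\Cokertop(\rho_X) \simeq \Cokertop(\rho_{\overline X})$.

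Combining the previous paragraph with Theorem~\ref{thm:ucft}(i), which gives $\Cokertop(\rho_{\overline X}) \simeq \Zhat^{\rXbar}$ with $\rXbar$ the rank of $\overline X$, yields the claim.

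The main obstacle I expect is the density/closure bookkeeping: one must verify carefully that passing from $X$ to $\overline X$ interacts correctly with the topological closures defining $\Cokertop$, i.e.\ that the continuous surjection $\Cokertop(\rho_X) \to \Cokertop(\rho_{\overline X})$ of topological groups is not merely an abstract isomorphism but a homeomorphism. Since the target $\Zhat^{\rXbar}$ is compact Hausdorff and profinite, and $\Cokertop(\rho_X)$ is Hausdorff by construction (it is the quotient of the compact group $\piabX$ by a closed subgroup), a bijective continuous homomorphism between them is automatically a homeomorphism; so the only real work is the algebraic identification of the closed image, i.e.\ showing $\Ker(\piabX \to \piabXbar) \subseteq \ol{\Im(\rho_X)}$, which is exactly the inertia computation sketched above using Prop.~\ref{prop:Kato} and Thm.~\ref{thm:lcft}(ii).
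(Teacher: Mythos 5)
Your argument is correct in outline, but it takes a genuinely different route from the paper. The paper works entirely on the dual side: it writes down the localization exact sequence $0\to H^1(\Xbar,\QZ)\to H^1(X,\QZ)\to\bigoplus_{x\in\Xinf}H^2_x(\Xbar,\QZ)$, matches it term by term with the dual of the defining presentation of $\CX$ using the local isomorphisms $\rho_{\kx}^{\vee}$ and $\rho_{\kXx}^{\vee}$ (Thm.~\ref{thm:lcft}(i)), deduces $\Ker(\rhoXv)\simeq\Ker(\rhoXbarv)$, and concludes via $\Cokertop(\rhoX)\simeq\Ker(\rhoXv)^{\vee}$ and Thm.~\ref{thm:ucft}(i); that diagram is then recycled in the proof of Prop.~\ref{prop:p}. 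You instead work directly with the groups, showing the natural surjection $\Cokertop(\rhoX)\to\Cokertop(\rhoXbar)$ is an isomorphism because the kernel of $\piabX\surj\piabXbar$ is topologically generated by boundary inertia, which lies in $\ol{\Im(\rhoX)}$. This is a legitimate and arguably more elementary argument (no \'etale cohomology needed), but two points deserve more care than your sketch gives them. First, Thm.~\ref{thm:lcft}(ii) only yields $I_{\kXx}=\ol{\rho_{\kXx}(U^0K_2(\kXx))}$ (the two closed subgroups have the same annihilator in $H^1(\kXx)$, hence coincide by Pontrjagin duality for profinite groups), not the equality without closure that you assert; this is harmless since you compare with the closed subgroup $\ol{\Im(\rhoX)}$, but it should be said. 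Second, injectivity of the induced map on cokernels needs $\pi^{-1}(\ol{\Im(\rhoXbar)})=\ol{\Im(\rhoX)}$, which requires not only $\Ker(\pi)\subset\ol{\Im(\rhoX)}$ but also $\pi(\ol{\Im(\rhoX)})=\ol{\Im(\rhoXbar)}$; the latter follows from surjectivity of $\CX\to\CXbar$ (each tame symbol $\d_x$ is onto $\kxt$) together with compactness of $\ol{\Im(\rhoX)}$ in $\piabX$, so that its image is already closed. With these two points filled in, your proof is complete and delivers the same conclusion as the paper's.
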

\begin{proof} 
For any $x \in \Xinf = \Xbar \ssm X$, put 
$Y := \Spec(\O_{\Xbar,x}^{\wedge})$, where 
$\O_{\Xbar,x}^{\wedge}$ is the completion of the local ring $\O_{\Xbar,x}$. 
The localization sequence of the \'etale cohomology groups on 
$i:x \inj Y$ 
(\cite{Fu11}, Prop.\ 5.6.12)
gives an exact sequence 
\[
\xymatrix@C=5mm{
	0 \ar[r] & H^1(Y,\QZ) \ar[r] & H^1(\Spec(\kXx),\QZ) \ar[r] &  H^2_{x}(Y,\QZ) \ar[r] & H^2(Y,\QZ).
}\]
In terms of the Galois cohomology groups (\cite{Fu11}, Prop.\ 5.7.8), 
we have 
\[
\xymatrix@C=5mm{
	H^n(Y,\QZ) \ar[r]_-{i^{\ast}}^-{\simeq} & H^n(x,\QZ) \simeq 
	\HGal^n(k(x),\QZ)
}
\]
(the first isomorphism follows from \cite{SGA4}, Exp.\ XII, Rem.\ 6.13) 
and $H^n(\Spec(\kXx),\QZ) \simeq \HGal^n(\kXx,\QZ)$.
By the Tate duality theorem for local fields 
(\cite{NSW}, Thm.\ 7.2.6) (for prime to the $p$-part) 
and the dimension reason (\cite{NSW}, Prop.\ 6.5.10) (for the $p$-part),    
we have
\begin{equation}
\label{eq:H2(k)}
\HGal^2(\kx,\QZ) = 0.
\end{equation}
The excision theorem induces $H^2_x(Y,\QZ) \simeq H^2_x(\Xbar, \QZ)$ 
(\Cf \cite{Fu11}, Prop.\ 5.6.12). 
We also have 
$H^1_{\Gal}(\kXx,\QZ) \simeq H^1(\kXx)$ \eqref{eq:GFv}. 
Thus, we obtain the commutative diagram below: 
\begin{equation}
\label{eq:locx}
\vcenter{
  \xymatrix{
 0 \ar[r] & H^1(\kx) \ar[d]_{\simeq}^{\rho_{\kx}^{\vee}}\ar[r] & H^1(\kXx) 
 \ar[d]^{\rho_{\kXx}^{\vee}}_{\simeq} \ar[r]&  H^2_x(\Xbar,\QZ) 
 \ar[r]\ar@{-->}[d]^{\phi_x} & 0 \\
 0 \ar[r] & (\kxt)^{\vee} \ar[r]^-{\dx^{\vee}} & K_2(\kXx)^{\vee} 
 \ar[r]& U^0K_2(\kXx)^{\vee}  & ,\\
 }}
\end{equation}
where  
$\rho_{\kx}$ and $\rho_{\kXx}$ are the reciprocity maps of $\kx$ and 
$\kXx$ respectively 
(Thm.\ \ref{thm:lcft}). 
Here, the bottom sequence is exact.  
%
%
Using a canonical isomorphism 
\begin{equation}
  \label{eq:H1pi}
  H^1(X,\QZ) \simeq \piab(X)^{\vee} 
\end{equation}
(\cite{SGA4.5}, Exp.\ 1, Sect.\ 2.2.1, or \cite{Fu11}, Prop.\ 5.7.20),  
%
we consider the following commutative diagram:  
\begin{equation}
\label{eq:loc}
\vcenter{
\entrymodifiers={!! <0pt, .8ex>+}
 \xymatrix@R=5mm@C=5mm{ 
 0\ar[r] & H^1(\Xbar, \QZ) \ar[d]^{\rhoXbarv}\ar[r] & H^1(X, \QZ) 
 \ar[d]^{\rhoXv}\ar[r] 
   & \displaystyle{\bigoplus_{x\in \Xinf}H^2_{x}(\Xbar, \QZ)} 
   \ar[d]^-{\oplus\, \phi_x}  \\
  0 \ar[r] & \CXbarv  \ar[r] &  \CXv \ar[r]^-{i} 
   & \displaystyle{\bigoplus_{x\in \Xinf}U^0K_2(\kXx)^{\vee}}.
 }
}
\end{equation}
Here, the upper horizontal sequence is the localization sequence 
associated to $\Xinf \inj \Xbar$. 
The diagram \eqref{eq:loc} 
gives 
$\Ker(\rhoXbarv) \simeq \Ker(\rhoXv)$.  
By Thm.\ \ref{thm:ucft} (i), we obtain 
\[
	\Zhat^{\rXbar} \simeq \Cokertop(\rhoXbar) \simeq 
	\Ker(\rhoXbarv)^{\vee} 
	\simeq \Ker(\rhoXv)^{\vee} \simeq \Cokertop(\rhoX). 
\]
%
%
The assertion follows from this. 
\end{proof}

For any effective Weil divisor $D$ on $\Xbar$ 
whose support $|D| \subset \Xinf$, 
we have canonical surjective homomorphisms 
\[
\xymatrix@C=5mm{
  \piabX \ar@{->>}[r] & \piabXD \ar@{->>}[r] & \piabXbar 
}\]
from the very definition of $\piabXD$ (Def.\ \ref{def:piabXD}). 
The above Thm.\ \ref{thm:coker} and Thm.\ \ref{thm:ucft} (i) imply also 
\begin{equation}
\label{eq:cok(rhoXt)}	
\Cokertop(\rhoXD) := \piabXD/\ol{\Im(\rhoXD)} 
\simeq  
\Zhat^{\rXbar}.
\end{equation}
\begin{lem}
\label{lem:rhoXtg}
For the map 
$\rhoXtg:\CtXg\to \pitabXg$ $\mathrm{\eqref{eq:rhot}}$, we have 
$\#\Im(\rhoXtg) < \infty$.
\end{lem}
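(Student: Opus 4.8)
The plan is to combine the commutative diagram \eqref{eq:VXD} (for the divisor $D = \Xinf$) with the unramified statement Thm.\ \ref{thm:ucft} (iii), which asserts $\#\Im(\rhoXbarg) < \infty$. First I would write down the diagram
\[
\xymatrix@R=5mm{
  0 \ar[r] & \CtXg \ar[r]\ar[d]^{\rhoXtg}& \CtX \ar[r]^-{N_{X,\Xinf}}\ar[d]^{\rhoXt} & k^{\times} \ar[d]^{\rho_k}\\
  0\ar[r]  & \pitabXg \ar[r] & \pitabX \ar[r] & G_k^{\ab} \ar[r] & 0
}
\]
together with the surjections $\pitabX \surj \piabXbar$ and $\pitabXg \surj \piabXbarg$ coming from Def.\ \ref{def:piabXD}. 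The key observation is that $\Im(\rhoXtg)$ lands in $\pitabXg$, which is an extension-type subgroup of the \emph{profinite} group $\pitabX$; so to bound it I would bound its image in a suitable finite or well-understood quotient and control the kernel of that quotient map separately.

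The main structural input is \eqref{eq:cok(rhoXt)}, which gives $\Cokertop(\rhoXt) = \pitabX/\ol{\Im(\rhoXt)} \simeq \Zhat^{\rXbar}$, i.e.\ $\ol{\Im(\rhoXtg)}$ has finite index in the torsion-plus-$\Zhat^{\rXbar}$ picture of $\pitabXg$ exactly as in \eqref{eq:rho0}. Next I would argue that the composite $\CtXg \onto{\rhoXtg} \pitabXg \surj \piabXbarg$ is, up to the tame-to-unramified comparison, essentially $\rhoXbarg$ precomposed with the natural map $\CtXg \to \CXbarg$ (which exists since every class in $\CtX$ maps to a class in $\CXbar$ after forgetting ramification data). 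By Thm.\ \ref{thm:ucft} (iii) the image of $\rhoXbarg$ is finite, so the image of $\rhoXtg$ in $\piabXbarg$ is finite. It then remains to show that the kernel $K := \Ker(\pitabXg \surj \piabXbarg)$ meets $\Im(\rhoXtg)$ in a finite group; but $K$ is generated by the images of the tame inertia subgroups $I_{\kXx}^0/I_{\kXx}^1$ at the points $x \in \Xinf$, each of which is a quotient of $\kx^{\times}$ (the tame quotient), hence has a finite prime-to-$p$ part, and the reciprocity map $\rhoXt$ sends the local unit part $U^0K_2(\kXx)/U^1K_2(\kXx)$ onto exactly this tame piece. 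Combining: $\Im(\rhoXtg)$ is an extension of a finite group by a finite group, hence finite.

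The hard part will be making the second reduction precise — namely identifying $K = \Ker(\pitabXg \surj \piabXbarg)$ with the subgroup generated by tame inertia and checking that the portion of it hit by $\rhoXtg$ is controlled by the finite tame quotients $\kx^{\times}/(\text{norms})$ rather than by something infinite. Concretely I expect to need: (a) the local statement that $\rho_{\kXx}$ carries $U^0K_2(\kXx)$ onto $I_{\kXx}$ and $U^1K_2(\kXx)$ onto the wild inertia $I_{\kXx}^1$ (this follows from Prop.\ \ref{prop:Kato} and the discussion after it), so that $U^0K_2(\kXx)/U^1K_2(\kXx)$ surjects onto the tame inertia $I_{\kXx}^0/I_{\kXx}^1$, a finite group once we work over the local field $k$ with finite residue field; and (b) a diagram chase in \eqref{eq:VXD} showing $\Im(\rhoXtg)$ sits in an exact sequence with finite outer terms. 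Once (a) and (b) are in place the finiteness of $\Im(\rhoXtg)$ is immediate, since a group caught between two finite groups in a short exact sequence is finite.
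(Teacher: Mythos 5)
Your overall strategy coincides with the paper's: both arguments place $\Im(\rhoXtg)$ in an exact sequence whose right-hand term is controlled by $\#\Im(\rhoXbarg)<\infty$ (Thm.\ \ref{thm:ucft} (iii)) and whose left-hand term is the direct sum over $x\in\Xinf$ of the tame inertia quotients $I_{\kXx}^0/I_{\kXx}^1$. The genuine gap is in your step (a), where you assert that each $I_{\kXx}^0/I_{\kXx}^1$ is ``a quotient of $\kxt$ (the tame quotient), hence has a finite prime-to-$p$ part,'' and later that it is ``a finite group once we work over the local field $k$.'' That is the one-dimensional picture. Here $\kXx$ is a \emph{two}-dimensional local field whose residue field is the local field $\kx$, and by Prop.\ \ref{prop:Kato} the group mapping to $I_{\kXx}^0/I_{\kXx}^1$ under $\rho_{\kXx}$ is $U^0K_2(\kXx)/U^1K_2(\kXx)\simeq K_2(\kx)$, not $\kxt$. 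The group $K_2(\kx)$ is very far from finite: by Merkurjev's theorem it is the direct sum of a finite group and an uncountable divisible group. So the finiteness of $I_{\kXx}^0/I_{\kXx}^1$, which your argument needs, is not automatic and is precisely the point requiring proof; moreover ``finite prime-to-$p$ part'' would not suffice even if correct, since you need the entire tame inertia contribution to be finite.

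The paper closes this gap as follows: by Thm.\ \ref{thm:lcft} (i) the dual map $(I_{\kXx}^0/I_{\kXx}^1)^{\vee}\inj (U^0K_2(\kXx)/U^1K_2(\kXx))^{\vee}\simeq K_2(\kx)^{\vee}$ is injective; finite-order continuous characters annihilate the divisible summand of $K_2(\kx)$, so $K_2(\kx)^{\vee}$ is finite, hence $(I_{\kXx}^0/I_{\kXx}^1)^{\vee}$ is finite and therefore so is the profinite group $I_{\kXx}^0/I_{\kXx}^1$ itself. (Equivalently: the image of the divisible summand in the profinite group $\pitabXg$ is a divisible subgroup and hence trivial, so only the finite summand of $K_2(\kx)$ contributes.) Once you replace your step (a) with this input---the identification $U^0K_2(\kXx)/U^1K_2(\kXx)\simeq K_2(\kx)$ via the two isomorphisms $K_2(\kXx)/U^0K_2(\kXx)\isomto\kxt$ and $K_2(\kXx)/U^1K_2(\kXx)\simeq K_2(\kx)\oplus\kxt$, followed by the Merkurjev-plus-duality argument---the remaining diagram chase in your step (b) is exactly the paper's proof.
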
 
\begin{proof}
	For each $x\in \Xinf$, 
	let $I_{\kXx} = I_{\kXx}^0$  be the inertia subgroup of $G_{\kXx}^{\ab}$, that is, 
	the kernel of the restriction 
	$G_{\kXx}^{\ab} \to G_{\kx}^{\ab}$. 
	Thm.\ \ref{thm:lcft} and Prop.\ \ref{prop:Kato} imply that 
	$\rho_{\kXx}$ induces 
	$U^0K_2(\kXx)/U^1K_2(\kXx) \to I_{\kXx}^0/I_{\kXx}^1$.
%
	This gives the following commutative diagram with exact rows: 
	\[
		\entrymodifiers={!! <0pt, .8ex>+}
		\xymatrix@C=5mm@R=5mm{
			\displaystyle{\bigoplus_{x\in \Xinf} U^0K_2(\kXx)/U^1K_2(\kXx)} \ar[r] 
			\ar[d] & \CtXg 
			\ar[r]\ar[d]^{\rhoXtg}& 
			\CXbarg \ar[d]^{\rhoXbarg} \ar[r]& 0\ \\
			 \displaystyle{\bigoplus_{x\in \Xinf} I_{\kXx}^0/I_{\kXx}^1} \ar[r] & 
			 \pitabXg 
			\ar[r]^-{\varphi}
			\ar[r] & \piabXbarg\ar[r] & 0,
		}
	\]
	where $\varphi$ is the induced homomorphism 
	from the open immersion $X\inj \Xbar$. 
	For each $x\in \Xinf$, 
	we have 
	\begin{itemize}
	\item $\dx:K_2(\kXx)/U^0K_2(\kXx)  \isomto \kxt$ (by $U^0K_2(\kXx) = \Ker(\dx)$), and 
	\item $K_2(\kXx)/U^1K_2(\kXx) \simeq K_2(\kx)\oplus \kxt$ 	(\Cf \cite{FV}, Chap.\ IX, Prop.\ 2.2). 
	\end{itemize}  
	These isomorphisms give 
	$U^0K_2(\kXx)/U^1K_2(\kXx) \simeq  K_2(\kx)$. 
	By Merkrjev's theorem (\cite{FV}, Chap.\ IX, Thm.\ 4.3), 
	$K_2(\kx)$ is the sum of a finite group and a divisible subgroup. 
	By Thm.\ \ref{thm:lcft}, 
	$\rho_{\kXx}^{\vee}$ induces an injective homomorphism  
	$(I_{\kXx}^0/I_{\kXx}^1)^{\vee} \inj (U^0K_2(\kXx)/U^1K_2(\kXx))^{\vee}$.  
	Therefore, the quotient $I_{\kXx}^0/I_{\kXx}^1$ is finite 
	and  so is $\Ker(\varphi)$. 
	The assertion $\# \Im(\rhoXtg) < \infty$ 
	follows from $\#\Im(\rhoXbar^0) < \infty$ (Thm.\ \ref{thm:ucft}, 
	(iii)).   
\end{proof}
From Lem.\ \ref{lem:rhoXtg} and \eqref{eq:cok(rhoXt)}, 
we have a short exact sequence 
\begin{equation}
\label{eq:pitabXg}
\xymatrix@C=5mm{
  0\ar[r] & \pitabX^0_{\tor} = \Im(\rhoXtg) \ar[r] & \pitabXg \ar[r] &  
  \Zhat^{\rXbar} \ar[r] & 0 .
  }
\end{equation}

\subsection*{Open curves}

The rest of this section is devoted to show 
Thm.\ \ref{thm:main2intro} (ii) (=Thm.\ \ref{thm:main2} below).    
Recall $\Np = \set{m \in \Z_{\ge 1} | \mbox{$m$ is prime to $p$}}$, 
and, 
the reciprocity map $\rhoX$  induces 
$\rhoXm: \CX/m \to \piabX/m$ for each $m\in \Z_{\ge 1}$.

\begin{lem}
\label{lem:rhoXm}
For any $m\in \Np$,  
$\rhoXm:\CX/m\to \piabX/m$ 
is injective.
\end{lem}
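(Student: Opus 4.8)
The plan is to reduce the statement to the projective case $\Xbar$ — where it follows from the unramified class field theory (Thm.\ \ref{thm:ucft}) — by exploiting the localization sequences already assembled in the proof of Thm.\ \ref{thm:coker}. More precisely, I would pass to Pontrjagin duals: since $m \in \Np$ is prime to $p$ and all groups in sight have well-behaved $m$-torsion, injectivity of $\rhoXm : \CX/m \to \piabX/m$ is equivalent to surjectivity of the dual map $(\piabX/m)^\vee \to (\CX/m)^\vee$, i.e.\ $\rhoXmv : H^1(X,\Z/m) \to (\CX/m)^\vee$ (using \eqref{eq:H1pi} mod $m$). Equivalently, by a standard diagram chase it suffices to show $\Coker(\rhoXmv) = 0$, or that $\rhoXv$ is surjective after reduction mod $m$.

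First I would set up the mod-$m$ analogue of diagram \eqref{eq:loc}: the localization sequence for $\Xinf \inj \Xbar$ with $\Z/m$-coefficients on the top row, the corresponding sequence of id\`ele class groups on the bottom, and the vertical maps $\rhoXbarv$, $\rhoXv$, $\oplus \phi_x$ reduced mod $m$. The key local input is that for each $x \in \Xinf$, the map $\phi_x : H^2_x(\Xbar,\Z/m) \to (U^0K_2(\kXx)/m)^\vee$ is an isomorphism: this comes from diagram \eqref{eq:locx}, the fact that $\rho_{\kx}^\vee$ and $\rho_{\kXx}^\vee$ are isomorphisms (Thm.\ \ref{thm:lcft}\,(i)), and the vanishing $\HGal^2(\kx,\Z/m) = 0$ for $m \in \Np$ (Tate duality, as in \eqref{eq:H2(k)}), which makes the rows of \eqref{eq:locx} short exact mod $m$ as well. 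A snake-lemma argument on the mod-$m$ version of \eqref{eq:loc} then gives $\Coker(\rhoXmv) \hookrightarrow \Coker(\rhoXbarv \bmod m)$ (possibly up to controlling the boundary term, which vanishes by the $\phi_x$ isomorphism), so it remains to handle $\Xbar$.

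For the projective curve: from Thm.\ \ref{thm:ucft}\,(i), $\Cokertop(\rhoXbar) \simeq \Zhat^{\rXbar}$, which is torsion-free. Dually, $\rhoXbarv : H^1(\Xbar,\QZ) \to \CXbarv$ has image of finite index with torsion-free cokernel; reducing mod $m$, since $\Zhat^{\rXbar}/m$ contributes only to a $\mathrm{Tor}$-term, I would use that $\Ker(\rhoXbar)$ is divisible (Thm.\ \ref{thm:ucft}\,(ii)) to conclude $\rhoXbar/m : \CXbar/m \to \piabXbar/m$ is injective, hence its dual $\rhoXbarv \bmod m$ is surjective and $\Coker(\rhoXbarv \bmod m) = 0$. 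Chasing back through the mod-$m$ diagram \eqref{eq:loc} then yields surjectivity of $\rhoXmv$, i.e.\ injectivity of $\rhoXm$.

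The main obstacle I anticipate is purely bookkeeping: making the reduction-mod-$m$ of the localization diagram \eqref{eq:loc} genuinely exact. One must check that applying $-/m$ (equivalently $(-)^\vee$ on the $m$-torsion) does not introduce spurious connecting maps — in particular that $H^2_x(\Xbar,\Z/m)$ really is the right mod-$m$ object and that the bottom row $\CXbarv \to \CXv \to \oplus U^0K_2(\kXx)^\vee$ stays exact mod $m$. This should follow from the fact that for $m \in \Np$ the groups $U^0K_2(\kXx)$ have trivially vanishing higher cohomology in the relevant sense (their mod-$m$ duals being $\phi_x$-images), but it is the step where the hypothesis $m \in \Np$ is essential — at $p$ the module $W_r\Omega_{\kx,\log}$ obstructs the analogue of \eqref{eq:H2(k)}.
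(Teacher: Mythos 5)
Your plan diverges from the paper's proof and, as written, has genuine gaps. The paper does \emph{not} prove this lemma by dualizing and chasing the localization diagram \eqref{eq:loc}; it works covariantly: it compares the defining presentation of $\CX/m$ with the localization sequence for $H^3(\Xbar,j_!\Z/m(2))$, shows the middle vertical map is an isomorphism (Kummer theory plus absolute purity at $x\in X_0$, Merkurjev--Suslin plus excision at $x\in\Xinf$) and the left one is surjective (Merkurjev--Suslin for $\kX$), deduces $\CX/m\hookrightarrow H^3_c(X,\Z/m(2))$, and then invokes Saito's duality theorem $\piabX/m\simeq H^3_c(X,\Z/m(2))$. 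No hypothesis on $r(\Xbar)$ and no character-theoretic argument is needed.

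The concrete problems with your route are the following. First, the local input fails at finite level: for $m\in\Np$ one has $\HGal^2(\kx,\Z/m)\simeq\mu_m(\kx)^{\vee}$ by Tate duality, which is \emph{not} zero in general; the vanishing \eqref{eq:H2(k)} holds only for the colimit $\QZ$ (and for $\Z/p^n$, via $\mathrm{cd}_p\le 1$ in characteristic $p$ --- which is exactly why the paper can run this diagram with $\Z/p^n$ but not with $\Z/m$). Hence the mod-$m$ version of \eqref{eq:locx} is not short exact and $\phi_{x,m}$ need not be an isomorphism. Second, even granting the local isomorphisms, your diagram chase does not close up: a character $\vp\in(\CX/m)^{\vee}$ produces a class $\gamma\in\bigoplus_{x\in\Xinf}H^2_x(\Xbar,\Z/m)$, and for $\gamma$ to lift to $H^1(X,\Z/m)$ you must kill $j(\gamma)\in H^2(\Xbar,\Z/m)$, which is nonzero in general (it already contains $\mathrm{Pic}(\Xbar)/m$ for the twisted coefficients and an analogous obstruction here). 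This obstruction does not ``vanish by the $\phi_x$ isomorphism''; eliminating it is precisely the bulk of the proof of Prop.~\ref{prop:p} (base change to $k'$, norm arguments, Hochschild--Serre), and that argument uses Cor.~\ref{cor:ucftmv}, hence the hypothesis $r(\Xbar)=0$ --- a hypothesis absent from this lemma and from the places it is used (Thm.~\ref{thm:main2}). Third, the very first reduction (injectivity of $\rhoXm$ from surjectivity of $\rhoXmv$) needs continuous finite-order characters to separate points of $\CX/m$; since $\CX$ carries a quotient of a restricted-product topology, this is not free, and in the paper it is only a \emph{consequence} of the embedding $\CX/m\hookrightarrow H^3_c(X,\Z/m(2))$. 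Note also that the paper's logical order is the reverse of yours: the prime-to-$p$ surjectivity of $\rhoXmv$ in Prop.~\ref{prop:p} is \emph{deduced from} this lemma together with finiteness, so your argument would be circular if transplanted into the paper. I would recommend abandoning the dual/localization route and arguing directly with $H^3_c$ and the duality theorem as the paper does.
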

\begin{proof}
	For any $m \in \Np$, 
	we have 
	$H^3_c(X,\Z/m(2)) = H^3(\Xbar,j_!\Z/m(2))$ 
	(\cite{Fu11}, Sect.\ 7.4), 
	where $\Z/m(n) = \mu_m^{\otimes n}$  and 
	$j:X \inj \Xbar$ is the open immersion.	
	We define a commutative diagram: 
	\[
   	\entrymodifiers={!! <0pt, .8ex>+}
   	\xymatrix@C=-2mm@R=4mm{
    	K_2(\kX)/m \ar[r]\ar[d]^-{h} & \displaystyle{\bigoplus_{x\in 
    	X_0}}\kx^{\times}/m \oplus 
    \displaystyle{\bigoplus_{x\in \Xinf}} K_2(\kXx)/m  \ar[r] \ar[d] & \CX/m 
    \ar[r]\ar[d] & 0 \\
    \HGal^2(\kX,\Z/m(2)) \ar[r] & \displaystyle{\bigoplus_{x\in 
    \Xbar_0}}H^3_x(\Xbar, j_!\Z/m(2))\ar[r]&  H^3(\Xbar,j_!\Z/m(2)) &.  
    }
	\]
	\vspace{-10mm}
	
	\noindent
	Here, the horizontal sequences are exact, 
	and the left vertical map $h$  
	is bijective by the Merkurjev-Suslin theorem \cite{MS82}.
	The middle vertical map is also bijective 
	from the following facts:  
	\begin{itemize} 
	\item For $x \in X_0$, the Kummer theory gives 
	$K_1(\kx)/m \isomto \HGal^1(\kx,\Z/m(1)) \simeq H^3_{x}(\Xbar,j_!\Z/m(2))$,
	where the latter isomorphism follows from 
	the excision theorem (\cite{Fu11}, Prop.\ 5.6.12): 
	$H^3_x(\Xbar,j_{!}\Z/m(2)) \simeq H^3_x(X,\Z/m(2))$,  
	the purity theorem (\cite{Fu11}, Cor.\ 8.5.6) 
	for the closed immersion $i:x\inj X$:  
	\[
	R^ti^{!}\Z/m(2) = \begin{cases} 
	0 ,& t\neq 2, \\
	i^{\ast}\Z/m(1), & t= 2, 
	\end{cases}
	\]
	and the Leray spectral sequence (\cite{Fu11}, Prop.\ 5.6.11): 
	$E_2^{s,t} = H^s(x, R^ti^{!}\Z/m(2)) \Rightarrow H^{s+t}_x(X,\Z/m(2))$.
	
	\item For $x \in \Xinf$, the Merkurjev-Suslin theorem again  gives  
	\[
	\xymatrix@C=5mm{
		K_2(\kXx)/m \ar[r]^-{\simeq} &  \HGal^2(\kXx,\Z/m(2)) \simeq 
		H^3_{x}(\Xbar,j_!\Z/m(2)). 
	}
	\]
	\end{itemize}
	Here, the latter isomorphism is given by the excision theorem (\cite{MilEC}, Chap.\ III, Cor.\ 1.28):
	\[
	  H^3_x(\Xbar, j_{!}\Z/m(2)) \simeq H^3_x(\Spec(\O_{\Xbar,x}^h), j_{!}\Z/m(2)), 
	\]
	and \cite{Mil06}, Chap.\ II, Prop. 1.1: 
	\[
	  H^3_x(\Spec(\O_{\Xbar,x}^h),j_{!}\Z/m(2)) \simeq \HGal^2(k(X)_x^h,\Z/m(2)) \simeq \HGal^2(\kXx,\Z/m(2)),
	\]
	where $\O_{\Xbar,x}^h$ is the henselization of the local ring $\O_{\Xbar,x}$, 
	and $k(X)_x^h$ is its fraction field. 
	Thus, the induced homomorphism $\CX/m\to H^3_c(X, \Z/m(2))$ is 
	injective from the above diagram. 
	By the duality theorem (\cite{Sai89}), 
	we have 
	$\piabX/m \simeq H^3_c(X,\Z/m(2))$ 
	so that $\rhoXm:\CX/m\to \piabX/m$ is 
	injective.  
\end{proof}

Before proving Thm.\ \ref{thm:main2intro} (ii) (=Thm.\ \ref{thm:main2} below), 
we prepare some notation (following \cite{For15}, Sect.\ 3) and quote a lemma from \cite{JS03}. 
For a set of primes $\L$ with $p \not \in \L$, define 
\begin{itemize}
\item $\NL:= \set{m \in \Np| \mbox{the prime divisors in $\L$}}$ as a sub monoid of $\Np$.  
\end{itemize}   
For an abelian  group $G$,
the natural surjective homomorphisms 
$G \to G/m$ for $m\in \NL$ induces a homomorphism 
\begin{equation}
\label{eq:whp}
\xymatrix@C=5mm{
\phi_{G,\L} : G\ar[r] &  G_{\L} := \displaystyle{\plim_{m \in \NL}G/m}.
}
\end{equation}

\begin{lem}[\cite{JS03}, Lem.\ 7.7]
\label{lem:JS}
Let $A$ be an abelian group, 
$\set{B_m}_{m \in \NL}$ a projective system of abelian groups, 
and a morphism  
$\set{\vp_m:A/m \to B_m}_{m \in \NL}$ of the projective systems. 
Put $B_{\L} :=  \plim_{m\in \NL}B_m$. 
If we assume that 

\begin{enumerate}[label=$\mathrm{(\alph*)}$]
\item $\vp_m$ is injective for all $m\in \NL$, and 
\item there exists $N \in \NL$ 
such that $N\cdot (B_{\L})_{\mathrm{tor}} = 0$,    
\end{enumerate}

\noindent
then  
$\Ker(\phi_{A,\L}: A \to A_{\L})$ 
is $l$-divisible for any prime $l \in \L$. 
\end{lem}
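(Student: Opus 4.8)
The plan is to fix a prime $l\in\L$, set $K:=\Ker(\phi_{A,\L}\colon A\to A_{\L})$, and prove directly that for every $x\in K$ and every $m\in\Z_{\ge 1}$ there exists $z\in K$ with $l^mz=x$. I would first record two easy reductions. Since $l\in\L$ we have $l^n\in\NL$ for all $n$, so the vanishing of $\phi_{A,\L}(x)$ already forces $x\in l^nA$ for every $n$; thus $l$-divisibility of $A$ itself is free, and the only real issue is to choose a divisor of $x$ that again lies in $K$. Next, the morphism of projective systems $\{\vp_m\colon A/m\to B_m\}$ induces $\vp_{\L}\colon A_{\L}=\plim_{m\in\NL}A/m\to\plim_{m\in\NL}B_m=B_{\L}$, and because $\plim$ is left exact and each $\vp_m$ is injective by (a), the map $\vp_{\L}$ is injective. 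Together with (b) this gives $N\cdot(A_{\L})_{\tor}=0$, since the torsion of $A_{\L}$ embeds into that of $B_{\L}$.

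With this set up, I would write $N=l^bN'$ with $b\ge 0$ and $N'$ prime to $l$ (possible, as every prime divisor of $N$ lies in $\L$). Given $x\in K$ and $m\ge 1$, put $n:=m+b$ and choose $y_n\in A$ with $l^ny_n=x$. Applying $\phi_{A,\L}$ and using $\phi_{A,\L}(x)=0$ yields $l^n\phi_{A,\L}(y_n)=0$, so $\phi_{A,\L}(y_n)$ is a torsion element of $A_{\L}$ of order dividing $l^n$; as $N=l^bN'$ kills it and multiplication by $N'$ is bijective on $l$-power torsion, it follows that $l^b\phi_{A,\L}(y_n)=0$. Hence $z:=l^by_n\in K$, while $l^mz=l^{m+b}y_n=l^ny_n=x$. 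Since $m$ was arbitrary, $K$ is $l$-divisible.

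I expect the only point needing care — everything else being formal (left exactness of $\plim$, the multiplicative structure of $\NL$, elementary facts about $l$-power torsion) — to be the exponent bookkeeping in the second paragraph: one has to divide $x$ by a power of $l$ that is strictly larger than the $l$-adic valuation $b$ of $N$, so that after multiplying the chosen divisor by $l^b$ to force it into $K$ there remain exactly $m$ spare factors of $l$; taking $n=m+b$ arranges precisely this.
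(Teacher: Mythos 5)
Your argument is correct: the observation that $\phi_{A,\L}(x)=0$ forces $x\in l^nA$ for all $n$, the injectivity of $\plim_m\vp_m$ by left exactness of $\plim$ (hence $N\cdot(A_{\L})_{\tor}=0$), and the bookkeeping with $N=l^bN'$ and $n=m+b$ all check out, yielding a divisor of $x$ by $l^m$ that again lies in the kernel. The paper itself offers no proof of this lemma — it is quoted from \cite{JS03}, Lem.\ 7.7 — and your argument is essentially the standard one given there, so there is nothing to object to beyond the harmless slip of calling this ``$l$-divisibility of $A$ itself'' (what you actually prove and use is that elements of the kernel are divisible by arbitrary powers of $l$ inside $A$).
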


\begin{thm}
\label{thm:main2}
Let $k$ be a local field of $\Char(k) = p$, and 
$X\subset \Xbar$ an open curve over $k$. 
Then 
$\Ker(\rhoX)$ is 
the maximal $l$-divisible subgroup of $\CX$ for all prime number $l\neq p$. 
\end{thm}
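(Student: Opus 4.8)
The statement is the conjunction of two assertions: ($\alpha$) $\Ker(\rhoX)$ is $l$-divisible for every prime $l\neq p$, and ($\beta$) $\Ker(\rhoX)$ contains every $l$-divisible subgroup of $\CX$. For ($\beta$) it is enough to prove $\bigcap_{n}l^{n}\CX\subset\Ker(\rhoX)$, because any $l$-divisible $D$ satisfies $D=l^{n}D\subset l^{n}\CX$ for all $n$; the opposite inclusion $\Ker(\rhoX)\subset\bigcap_{n}l^{n}\CX$ is immediate from the injectivity of $\rhoX_{,l^{n}}$ in Lem.\ \ref{lem:rhoXm}. Thus the plan is to establish that $\bigcap_{n}l^{n}\CX$ (and more generally $\bigcap_{m\in\Np}m\CX$) is $l$-divisible, and that $\rhoX$ annihilates it.

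The $l$-divisibility I would deduce from Lem.\ \ref{lem:JS}, applied with $A=\CX$, with $\L$ the set of primes $\neq p$ (so $\NL=\Np$), with $B_{m}=\piabX/m$, and with $\vp_{m}=\rhoX_{,m}\colon\CX/m\to\piabX/m$: hypothesis (a) of that lemma is exactly Lem.\ \ref{lem:rhoXm}, and for (b) one observes that $B_{\L}=\plim_{m\in\Np}\piabX/m$ is the maximal prime-to-$p$ quotient of $\piabX$. Since wild ramification is pro-$p$, this coincides with the maximal prime-to-$p$ quotient of $\pitabX=\piab(X,\Xinf)$, and the exact sequence \eqref{eq:pitabXg} together with the analogue of \eqref{eq:VX} for $\pitabX$ presents it as successive extensions of $\Zhat^{\rXbar}$, the finite group $\Im(\rhoXtg)$ (Lem.\ \ref{lem:rhoXtg}), and $G_{k}^{\ab}$; hence $(B_{\L})_{\tor}$ is killed by some $N\in\Np$, and Lem.\ \ref{lem:JS} yields that $\bigcap_{m\in\Np}m\CX$ is $l$-divisible for all $l\neq p$ (running the same argument with $\L=\{l\}$ gives likewise that $\bigcap_{n}l^{n}\CX$ is $l$-divisible). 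From this, $\Ker(\rhoX)$ itself is $l$-divisible: given $x\in\Ker(\rhoX)\subset\bigcap_{m\in\Np}m\CX$, choose $y\in\bigcap_{m\in\Np}m\CX$ with $ly=x$; then $l\cdot\rhoX(y)=0$, while $\rhoX(y)$ lies in $\bigcap_{m\in\Np}m\piabX$, which is the pro-$p$ part of $\piabX$ and so has no nontrivial $l$-torsion, whence $\rhoX(y)=0$ and $y\in\Ker(\rhoX)$.

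It remains to show $\rhoX\bigl(\bigcap_{n}l^{n}\CX\bigr)=0$; this is the part I expect to require the real work. Here I would use Thm.\ \ref{thm:coker} ($\piabX/\ol{\Im(\rhoX)}\simeq\Zhat^{\rXbar}$) together with the diagram \eqref{eq:VX} and the injectivity of $\rho_{k}$ (local class field theory for $k$) to present $\Im(\rhoX)$ as an extension of a subgroup of $k^{\times}$ by $\Im(\rhoXg)$, whose image in $\pitabXg$ is the finite group $\Im(\rhoXtg)$, and then compare with the proper curve $\Xbar$, where $\Ker(\rhoXbar)$ is already known to be the maximal divisible subgroup (Thm.\ \ref{thm:ucft}\,(ii)), in order to transfer the vanishing back to $X$. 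The main obstacle will be controlling the wild (pro-$p$) contributions coming from the boundary $\Xinf$: one must show that, although they are what prevents $\Ker(\rhoX)$ from being the maximal \emph{divisible} subgroup of $\CX$, they cause no obstruction to $l$-divisibility for $l\neq p$ — and this is where the duality underlying Lem.\ \ref{lem:rhoXm} and the structure theorem \eqref{eq:pitabXg} should carry the argument.
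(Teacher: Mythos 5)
Your first half --- the $l$-divisibility of $\Ker(\rhoX)$ for $l\neq p$ --- is essentially the paper's own argument: the identical application of Lem.\ \ref{lem:JS} with $A=\CX$, $B_m=\piabX/m$, $\vp_m=\rho_{X,m}$, hypothesis (a) supplied by Lem.\ \ref{lem:rhoXm} and hypothesis (b) by the finite generation of $\pitabX_{\L}$ coming from \eqref{eq:pitabXg} and $G_k^{\ab}$; your concluding step (lift $x\in\Ker(\rhoX)$ to $y$ with $ly=x$ inside $\bigcap_{m\in\Np}m\CX$ and kill $\rhoX(y)$ because it is $l$-torsion in the pro-$p$ part of $\piabX$) is exactly the exact sequence \eqref{eq:Kerdiv} combined with the paper's claim that $\Ker(\phi)$ is $l$-torsion free, which you establish by the slightly more direct remark that $\bigcap_{m\in\Np}m\piabX$ is the pro-$p$ component. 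That part is correct.

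The gap is in your $(\beta)$. It is not merely that the maximality direction is left as a sketch (``the part I expect to require the real work''); the statement you reduce it to, namely $\bigcap_{n}l^{n}\CX\subset\Ker(\rhoX)$, is false in general, so no elaboration of that sketch can succeed. For a closed point $x\in X_0$ the principal unit group $U^1_{\kx}\subset\kxt$ is pro-$p$, hence $l$-divisible for every $l\neq p$; its image in $\CX$ is therefore an $l$-divisible subgroup contained in $\bigcap_n l^n\CX$, yet its image under $\rhoX$ is the image of the wild inertia of $G_{\kx}^{\ab}$ under the decomposition map $G_{\kx}^{\ab}\to\piabX$, which is nonzero in general: take a wildly ramified cyclic extension $k'/k$ of degree $p$ with $k'\kx/\kx$ still ramified; then $U^1_{\kx}$ surjects onto $\Gal(k'\kx/\kx)\simeq\Gal(k'/k)$, a quotient of $\piabX$ via the covering $X\otimes_k k'\to X$. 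So the maximal $l$-divisible subgroup of $\CX$ for a single $l\neq p$ genuinely exceeds $\Ker(\rhoX)$, and the only maximality one can hope for is with respect to \emph{divisible} (by all integers, including powers of $p$) subgroups. The paper handles exactly that and nothing more: it notes in one line that a profinite group contains no nontrivial divisible elements, which forces $\rhoX$ to annihilate every fully divisible subgroup, and then devotes the entire proof to the $l$-divisibility of $\Ker(\rhoX)$. In short, the portion of your proposal that goes beyond the paper is precisely the portion that cannot be carried out as formulated; you should replace $(\beta)$ by the weaker (and correct) assertion about divisible subgroups.
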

\begin{proof} 
Since any profinite group does not contain non-trivial divisible elements, 
it is enough to show that, for any set of primes $\L$ with $p \not\in \L$, 
$\Ker(\rhoX)$ is $l$-divisible for all $l\in \L$. 
	From Lem.\ \ref{lem:rhoXm}, we have an injective homomorphism 
	$\rho_{X,\L} := \plim_{m\in \NL}\rhoXm: \CX_{\L} \inj \piabX_{\L}$ which commutes with $\rhoX$ as 
	in the following commutative diagram:  
	\[
	    \xymatrix{
		    \CX \ar[r]^{\rhoX}\ar[d]_{\psi} & \piabX \ar[d]^{\phi}\\
		    \CX_{\L} \ar@{^{(}->}[r]^{\rho_{X,\L}} & 
		    \piabX_{\L},
	    }
	\]
	where the vertical maps are the natural one $\psi = \phi_{\CX,\L}$ and $\phi = \phi_{\piabX,\L}$ defined in \eqref{eq:whp}. 
	This diagram gives an exact sequence 
	\begin{equation}
	\label{eq:Kerdiv}
	\xymatrix@C=5mm{
		0\ar[r] &  \Ker(\rhoX) \ar[r] & \Ker(\psi) \ar[r] & \Ker(\phi).  
	}\end{equation}

\begin{claim}
For any prime number $l\in \L$, we have 
\begin{enumerate} 
\item $\Ker(\psi)$ is $l$-divisible, and
\item $\Ker(\phi)$ is $l$-torsion free, that is, 
if we have $lx = 0$ with $x \in \Ker(\phi)$ then $x=0$.  
\end{enumerate}
\end{claim}
\begin{proof}
(i)  
Put $A := \CX, B_m := \piabX/m$ and $\vp_m := \rho_{X,m} : A/m \to B_m$. 
Using Lem.\ \ref{lem:JS}, 
we show that $\Ker(\psi) = \Ker(\phi_{A,\L})$ is $l$-divisible for any $l\in \L$. 
By Lem.\ \ref{lem:rhoXm}, $\vp_m =\rhoXm$ is injective for all $m\in \NL$: 
the condition (a) in Lem.\ \ref{lem:JS} holds. 


The tame fundamental group $\pitabX$ is defined by the wild inertia subgroups 
$I_{\kXx}^1$ for $x\in \Xinf$ in Def.\ \ref{def:piabXD} and \eqref{eq:tfg}. 
This group $I_{\kXx}^1$ is pro-$p$ so that 
we have 
$B_m = \piabX/m \isomto \pitabX/m$ for each $m\in \NL$. 
Taking the inverse limit,    
\begin{equation}
\label{eq:pipit}
\xymatrix@C=5mm{
B_{\L} = \piabX_{\L} \ar[r]^-{\simeq} & \pitabX_{\L}.
}
\end{equation}  
By local class field theory (of $k$) and the structure of the base field $k$ 
(e.g., \cite{Neu99}, Prop.\ 5.7 (ii)), 
$(G_k^{\ab})_{\L}$ is (topologically) finitely generated. 
For $(\pitabXg)_{\L}$ is finitely generated \eqref{eq:pitabXg},  
so is $B_{\L}$ by \eqref{eq:pipit}. 
%
Using the finiteness of the torsion part $(B_{\L})_{\tor}$, 
there exists $N\in \NL$ such that $N\cdot (B_{\L})_{\tor} = 0$: the condition (b) in Lem.\ \ref{lem:JS} holds. 
The claim (i) follows from Lem.\ \ref{lem:JS}. 
 
\sn
(ii) 
Putting 
$\phi^{\mathrm{t}} = \phi_{\pitabX,\L}$ \eqref{eq:whp}, 
the commutative diagram  
\[
 \xymatrix{
   \piabX \ar[d]_-{\phi }  \ar@{->>}[r]&  \pitabX 
   \ar[d]^-{\phi^{\mathrm{t}}}\\
   \piabX_{\L}  \ar[r]^{\simeq} & 
   \pitabX_{\L}  
 }
\]
induces a short exact sequence 
\[
\xymatrix@C=5mm{
  \displaystyle{\bigoplus_{x\in \Xinf} I_{\kXx}^{1}} \ar[r] & \Ker(\phi) \ar[r] &  
  \Ker(\phi^{\mathrm{t}}) \ar[r] & 0.
} 
\]
Recall that the wild inertia subgroup $I_{\kXx}^1$ is pro-$p$, 
in particular, $l$-torsion free. 
It is enough to show that $\Ker(\phi^{\mathrm{t}})$ is $l$-torsion free. 
We further consider the commutative diagram with exact rows: 
\[ 
  \xymatrix@C=10mm{
  0\ar[r] & \pitabXg\ar[d]^-{\phi^{\mathrm{t},0}} \ar[r] & \pitabX\ar[d]^-{\phi^{\mathrm{t}}} \ar[r] & 
  G_k^{\ab}\ar[d]^-{\phi_{k}} \ar[r] & 0\\
  0\ar[r] & (\pitabXg)_{\L} \ar[r] & \pitabX_{\L} \ar[r] &  (G_k^{\ab})_{\L} ,  
  }
\]
where  
$\phi^{\mathrm{t},0} := \phi_{\pitabXg,\L}$ 
and $\phi_k := \phi_{G_k^{\ab},\L}$ 
\eqref{eq:whp}.   
Since 
$\pitabXg$ is finitely generated \eqref{eq:pitabXg}, 
$\Ker(\phi^{\mathrm{t},0})$ is $l$-torsion free. 
By local class field theory, $\Ker(\phi_{k})$ is also $l$-torsion free.  
Therefore, the same holds on 
$\Ker(\phi^{\mathrm{t}})$. 
\end{proof}

\sn 
{\bf (Proof of Thm.\ \ref{thm:main2} - continued)}
By the exact sequence \eqref{eq:Kerdiv} and the claim above,  
$\Ker(\rhoX)$ is $l$-divisible for any prime $l\in\L$ as required. 
\end{proof}

\subsection*{Restricted Ramification}
\label{subsec:ray}
In closing this section, we derive 
the class field theory with \emph{modulus} 
from Thm.\ \ref{thm:main2} above. 


\begin{thm}
Let $D \ge 0$ 
be an effective Weil divisor on $\Xbar$ 
with support $|D| \subset  \Xinf$. 
For $\rho_{X,D}:\CXD \to \piabXD$, 
we have:

\begin{enumerate}
\item $\Ker(\rhoXD)$ is  
the maximal $l$-divisible subgroup of $\CXD$ for any prime $l\neq p$, and

\item $\Cokertop(\rhoXD) \simeq \Zhat^{\rXbar}$. 
\end{enumerate}
\end{thm}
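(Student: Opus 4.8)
The plan is to dispose of part~(ii) at once and to deduce part~(i) by transplanting the proof of Thm.~\ref{thm:main2}. Part~(ii) needs nothing new: the isomorphism $\Cokertop(\rhoXD)\simeq\Zhat^{\rXbar}$ is exactly \eqref{eq:cok(rhoXt)}, which was obtained from Thm.~\ref{thm:coker} and Thm.~\ref{thm:ucft}~(i) via the surjections $\piabX\surj\piabXD\surj\piabXbar$. So I focus on~(i).

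The first step will be a reduction to the case $|D| = \Xinf$. Put $X' := \Xbar\ssm|D|$, so that $X\subset X'\subset\Xbar$, the smooth compactification of $X'$ is again $\Xbar$, and $|D| = \Xbar\ssm X' = X'_\infty$. For $x\in\Xinf\ssm|D|$ one has $m_x = 0$, and dividing the $x$-component $K_2(\kXx)$ of $\CX$ by $U^0K_2(\kXx) = \Ker(\dx)$ replaces it, via the surjective boundary map $\dx$, by $\kxt$, which is precisely the $x$-component of $C(X')$; dually $I_{\kXx}^0 = \Ker(G_{\kXx}^{\ab}\to G_{\kx}^{\ab})$ is the full inertia, and by Prop.~\ref{prop:rhoK}~(i) these two descriptions match the local reciprocity maps. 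One checks in this way that $\CXD\cong C(X',D)$, $\piabXD\cong\piab(X',D)$ and $\rhoXD\cong\rho_{X',D}$. Since the statement for $(X',D)$ involves the same rank $\rXbar$ of the common compactification $\Xbar$, I may henceforth assume $m_x\ge 1$ for all $x\in\Xinf$.

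With this normalization the argument runs parallel to the proof of Thm.~\ref{thm:main2}, fed by two local facts. First, $I_{\kXx}^{m_x}\subset I_{\kXx}^{1}$, the wild inertia, which is pro-$p$; hence $\Ker(\piabX\surj\piabXD)$ is a closed pro-$p$ subgroup, $\piabXD\surj\pitabX$ still has pro-$p$ kernel, and therefore $\piabX/m\isomto\piabXD/m\isomto\pitabX/m$ for every $m\in\Np$. Second, $U^{m_x}K_2(\kXx)\subset U^{1}K_2(\kXx)$ is $l$-divisible for every prime $l\ne p$: it is generated by symbols $\set{u,v}$ with $u\in 1+\m_{\kXx}^{m_x}$, and since $\kXx$ has residue characteristic $p$ the group $1+\m_{\kXx}^{m_x}$ is pro-$p$, so $u = w^{l}$ for some $w\in 1+\m_{\kXx}^{m_x}$ and $\set{u,v} = l\set{w,v}$; consequently $\Ker(\CX\surj\CXD)$ is $m$-divisible for every $m\in\Np$ and $\CX/m\isomto\CXD/m$. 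Combining these isomorphisms with Lem.~\ref{lem:rhoXm} shows that $\rho_{X,D,m}:\CXD/m\to\piabXD/m$ is injective for all $m\in\Np$. Now fix a set of primes $\L$ with $p\notin\L$ and apply Lem.~\ref{lem:JS} with $A = \CXD$, $B_m = \piabXD/m$ and $\vp_m = \rho_{X,D,m}$: hypothesis~(a) is the injectivity just proved, and for~(b) one has $B_{\L} = \plim_{m\in\NL}\piabXD/m\cong\piabX_{\L}\cong\pitabX_{\L}$, which is finitely generated by \eqref{eq:pitabXg} and local class field theory exactly as in the proof of Thm.~\ref{thm:main2}, so $(B_{\L})_{\tor}$ is finite. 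Lem.~\ref{lem:JS} then gives that $\Ker(\phi_{\CXD,\L})$ is $l$-divisible for every $l\in\L$, and the commutative square relating $\rhoXD$ to the injection $\rho_{X,D,\L} := \plim_{m\in\NL}\rho_{X,D,m}:\CXD_{\L}\inj\piabXD_{\L}$ yields an exact sequence $0\to\Ker(\rhoXD)\to\Ker(\phi_{\CXD,\L})\to\Ker(\phi_{\piabXD,\L})$. Finally $\Ker(\phi_{\piabXD,\L})$ is $l$-torsion free, since the surjection $\piabXD\surj\pitabX$ with pro-$p$ kernel exhibits it as an extension of $\Ker(\phi_{\pitabX,\L})$ --- shown $l$-torsion free inside the proof of Thm.~\ref{thm:main2} --- by a pro-$p$ group. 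As there, this forces $\Ker(\rhoXD)$ to be $l$-divisible for every $l\in\L$; since $\L$ was arbitrary and a profinite group has no nonzero divisible element, $\Ker(\rhoXD)$ is the maximal $l$-divisible subgroup of $\CXD$ for every prime $l\ne p$, which is~(i).

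The step that I expect to demand the most care is the reduction of the second paragraph: checking that replacing $(X,D)$ by $(X',D)$ with $X' = \Xbar\ssm|D|$ alters none of $\CXD$, $\piabXD$, $\rhoXD$ is precisely where the hypothesis $|D|\subset\Xinf$ is absorbed into the normalization $|D| = \Xinf$. The only genuinely new local input is the $l$-divisibility of $U^{m_x}K_2(\kXx)$ for $m_x\ge 1$ --- equivalently, that the graded quotients of Kato's ramification filtration on $K_2(\kXx)$ in degrees $\ge 1$ are $p$-primary; everything downstream is a line-by-line copy of the proof of Thm.~\ref{thm:main2}.
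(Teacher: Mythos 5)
Your proposal is correct, but for part (i) it takes a genuinely different and much heavier route than the paper. Part (ii) is identical: both simply invoke \eqref{eq:cok(rhoXt)}. For part (i) the paper's entire argument is three lines: the surjection $\CX \surj \CXD$ induces a surjection $\Ker(\rhoX) \surj \Ker(\rhoXD)$, a quotient of an $l$-divisible group is $l$-divisible, so Thm.\ \ref{thm:main2} gives the $l$-divisibility of $\Ker(\rhoXD)$ immediately, and maximality follows as before since $\CXD/\Ker(\rhoXD)$ injects into a profinite group. You instead re-run the whole proof of Thm.\ \ref{thm:main2} at the level of $(X,D)$: after normalizing to $|D| = \Xinf$, you establish $\CX/m \isomto \CXD/m$ and $\piabX/m \isomto \piabXD/m$ for $m \in \Np$ and feed Lem.\ \ref{lem:JS} with $A = \CXD$. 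This costs you one extra local input --- the $l$-divisibility of $U^{m_x}K_2(\kXx)$ for $m_x \ge 1$, which is correct, since $1+\m_{\kXx}^{m_x}$ is $l$-divisible by Hensel's lemma for $l \ne p$ --- plus the bookkeeping of the reduction to $|D| = \Xinf$, which is indeed the step needing the most care (for $m_x=0$ neither $U^0K_2(\kXx)$ nor $I^0_{\kXx}$ is $p$-primary, so the normalization is essential to your argument). What your route buys is that it never uses the surjectivity of $\Ker(\rhoX) \to \Ker(\rhoXD)$, which the paper's shortcut relies on and which implicitly requires that every element of $\Im(\rhoX)$ killed in $\piabXD$ already lies in the image of $\bigoplus_x U^{m_x}K_2(\kXx)$ under $\rhoX$; your mod-$m$ analysis sidesteps this and yields the injectivity of $\rho_{X,D,m}$ for $m \in \Np$ as a by-product. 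In short: the paper's proof is economical, yours is self-contained; both reach the same conclusion.
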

\begin{proof}
The assertion (ii) is already given in \eqref{eq:cok(rhoXt)}. 
Furthermore, the surjective homomorphism $\CX \surj \CXD$ gives 
a homomorphism  
$\Ker(\rhoX) \surj \Ker(\rhoXD)$ which is also surjective. 
From Thm.\ \ref{thm:main2}, $\Ker(\rhoXD)$ is $l$-divisible for a prime $l\neq p$. 
Since profinite groups contain no non-trivial divisible elements, 
the assertion (i) follows. 
\end{proof}


\section{Proof of Thm.\ \ref{thm:mainintro}}
\label{sec:mainv}
We keep the notation of Section\ \ref{sec:main}. 

\subsection*{Unramified class field theory}

\begin{cor}
\label{cor:ucftv}
The induced homomorphism $\rhoXbarv : H^1(\Xbar,\QZ) \to  \CXbarv$ 
from the reciprocity map $\rhoXbar$ satisfies the following: 
\begin{enumerate}
\item  
$\Ker(\rhoXbarv) \simeq (\QZ)^{\rXbar}$, and 
\item $\rhoXbarv$ is surjective. 
\end{enumerate}
\end{cor}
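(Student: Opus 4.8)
The plan is to extract the corollary from the unramified class field theory, Thm.\ \ref{thm:ucft}, by applying the functor $(-)^{\vee}=\Hom(-,\QZ)_{\mathrm{fin}}$, after using \eqref{eq:H1pi} to identify $H^1(\Xbar,\QZ)$ with $\piabXbar^{\vee}$; under this identification $\rhoXbarv$ is the dual $\rhoXbar^{\vee}\colon\piabXbar^{\vee}\to\CXbarv$ of the reciprocity map.

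For (i): a finite-order continuous character $\chi\colon\piabXbar\to\QZ$ lies in $\Ker(\rhoXbarv)$ exactly when it kills $\Im(\rhoXbar)$; since $\QZ$ is discrete, $\Ker(\chi)$ is closed, so this is equivalent to $\chi$ killing $\ol{\Im(\rhoXbar)}$, i.e.\ to $\chi$ factoring through $\Cokertop(\rhoXbar)$. Thus $\Ker(\rhoXbarv)\simeq\Cokertop(\rhoXbar)^{\vee}$, which by Thm.\ \ref{thm:ucft}~(i) equals $(\Zhat^{\rXbar})^{\vee}$; and $(\Zhat^{\rXbar})^{\vee}\simeq(\QZ)^{\rXbar}$ because every continuous homomorphism from a profinite group to $\QZ$ has finite order and $\Hom_{\mathrm{cont}}(\Zhat,\QZ)\simeq\QZ$.

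For (ii) I would first prove that the geometric analogue $\rhoXbargv\colon\piabXbargv\to\CXbargv$ is surjective. By \eqref{eq:rho0}, which repackages Thm.\ \ref{thm:ucft}~(ii), (iii), the map $\rhoXbarg$ factors as $\CXbarg\twoheadrightarrow T\hookrightarrow\piabXbarg$, where $T:=\piabXbarg_{\tor}=\Im(\rhoXbarg)$ is finite and $\Ker(\rhoXbarg)$ is divisible; note also that $\Ker(\rhoXbarg)$ is open in $\CXbarg$, since $T$, a finite subgroup of a profinite group, carries the discrete topology. Dualizing, $\rhoXbargv$ factors as $\piabXbargv\twoheadrightarrow T^{\vee}\hookrightarrow\CXbargv$: the first map is surjective because any character of the finite subgroup $T$ extends to the profinite group $\piabXbarg$, and the second is surjective onto $\CXbargv$ because every finite-order character of $\CXbarg$ is automatically trivial on its divisible subgroup $\Ker(\rhoXbarg)$. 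To deduce the surjectivity of $\rhoXbarv$ itself, I would dualize the commutative diagram \eqref{eq:VX} with $X=\Xbar$: its rightmost vertical map is $\rho_k\colon k^{\times}\to\Gkab$, whose dual $\rho_k^{\vee}\colon H^1(k)=(\Gkab)^{\vee}\to(k^{\times})^{\vee}$ is bijective by local class field theory (a finite-order character of $k^{\times}$ is trivial on a finite-index open subgroup, hence extends to $\widehat{k^{\times}}=\Gkab$). A diagram chase through the dualized square, combining the surjectivity of $\rhoXbargv$ and of $\rho_k^{\vee}$, then yields the surjectivity of $\rhoXbarv$.

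The step I expect to be the real obstacle is not the homological bookkeeping but the topology: one must check that $(-)^{\vee}$ carries the short exact sequences read off from \eqref{eq:rho0} and \eqref{eq:VX} again to exact sequences, which is not formal since $\CXbar$ is not profinite, nor in general even locally compact. The way to handle this is to keep track, at each stage, of which subgroups are open, so that the corresponding quotients are discrete and finite-order characters extend freely, and of which kernels are divisible, so that finite-order characters descend across them; with this organized, the chases above are routine. An alternative route sidestepping \eqref{eq:VX} is to show directly that $\rhoXbar_{,m}\colon\CXbar/m\to\piabXbar/m$ is injective for every $m$ (for $m$ prime to $p$ by the Merkurjev--Suslin argument of Lem.\ \ref{lem:rhoXm} read with $X=\Xbar$, and for the remaining part using that $\Ker(\rhoXbar)$ is divisible, Thm.\ \ref{thm:ucft}~(ii)); then $\CXbar/m$ is finite, being a subgroup of the finite group $\piabXbar/m$, and any $\psi\in\CXbarv$ of order $m$ extends along the inclusion $\CXbar/m\hookrightarrow\piabXbar/m$ to a character of $\piabXbar$, but this variant rests on the same finiteness and divisibility inputs.
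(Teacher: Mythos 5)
Your proposal is correct and follows essentially the same route as the paper: part (i) is the dual of Thm.\ \ref{thm:ucft}\,(i), and for part (ii) you first obtain surjectivity of $\rhoXbargv$ from the finiteness of $\Im(\rhoXbarg)$ and the divisibility of $\Ker(\rhoXbarg)$ (the paper does this mod $m$ via \eqref{eq:rhoXbarm0}, you factor directly through the finite image $T$), and then chase the degree-zero sequence against $k^{\times}$ using the bijectivity of $\rho_k^{\vee}$, exactly as in the paper's diagram \eqref{eq:QZ}. The only substantive difference is that the paper produces the required exact row $0 \to \HGal^1(k,\QZ) \to H^1(\Xbar,\QZ) \to H^1(\Xbar_{\kbar},\QZ)^{G_k} \to \HGal^2(k,\QZ)=0$ from the Hochschild--Serre spectral sequence together with \eqref{eq:H2(k)}, rather than by dualizing \eqref{eq:VX} by hand, which cleanly disposes of the exactness issues you flag as the main obstacle.
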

\begin{proof}
	The assertion (i) follows from Thm.\ \ref{thm:ucft} (i). 
	By Thm.\ \ref{thm:ucft} (ii), $\rhoXbar^0$ defined in \eqref{eq:VX} induces an injection 
	$\rhoXbarm^0: \CXbar^0/m \inj \piabXbarg/m$. 
	Since the quotient $\CXbar^0/m$ is finite (Thm.\ \ref{thm:ucft} (iii)),  
	we obtain the 
	surjective homomorphism  
	\begin{equation}
	\label{eq:rhoXbarm0}
	\xymatrix@C=5mm{
		(\rhoXbarm^0)^{\vee}:(\piabXbarg/m)^{\vee} \ar@{->>}[r] & (\CXbar^0/m)^{\vee} 
	}\end{equation}
	on the dual groups for any $m\in \Z_{\ge 1}$. 
	Now, we show that 
	$\rhoXbargv : \piabXbargv \to  \CXbargv$  
	is surjective. 
	Take a character $\varphi \in \CXbargv$.
	By the very definition of  $\CXbargv$, 
		the character $\varphi$ has finite order (\Cf Notation).  
		Hence, there exists $m \in \Z_{\ge 1}$ 
		and $\varphi_m \in (\CXbarg/m)^{\vee}$ 
		such that $\varphi$ is the image of $\varphi_m$ 
		by the natural map $(\CXbarg/m)^{\vee}\to \CXbargv$. 
		Since $(\rhoXbarm^0)^{\vee}$ is surjective \eqref{eq:rhoXbarm0}, 
		there exists $\chi_m \in (\piabXbarg/m)^{\vee}$ such that 
		$(\rhoXbarm^0)^{\vee}(\chi_m) = \varphi_m$. 
		From the commutative diagram 
		\[
		  \xymatrix{
		     (\piabXbarg/m)^{\vee} \ar@{->>}[d]_{(\rho_{\Xbar,m}^0)^{\vee}} \ar[r] & 
		     \piabXbargv \ar[d]^{\rhoXbargv} \\
		    (\CXbarg/m)^{\vee} \ar[r] & (\CXbarg)^{\vee},
		  }
		\] 
		the image $\chi$  of $\chi_m$ by $(\piabXbarg/m)^{\vee} \to \piabXbargv$ 
		gives $\varphi= \rhoXbargv(\chi)$. 
	    Hence, $\rhoXbargv$ is surjective.
	    
	    On the other hand, the commutative diagram \eqref{eq:VX} 
	    and the Hochschild-Serre spectral sequence 
	    $\HGal^s(k, H^t(\Xbar_{\kbar},\QZ)) \Rightarrow H^{s+t}(\Xbar,\QZ)$
%
	    associated with the projection $\Xbar_{\kbar} \to \Xbar$  
	    	     (\Cf \cite{SGA4}, Exp.\ VIII, Cor.\ 8.5)  
	    give 
	    the following commutative diagram with exact rows: 
	    \begin{equation}
	    \label{eq:QZ}
	    \vcenter{
	     \xymatrix@C=4mm{
	     0\ar[r] & \HGal^1(k, \QZ) \ar[d]^{\rho_{k}^{\vee}}_{\simeq} \ar[r] & 
	    H^1(\Xbar, \QZ) \ar[d]^{\rho_{\Xbar}^{\vee}}\ar[r]  
	       & H^1(\Xbar_{\kbar},\QZ)^{G_k} \ar[d]  \ar[r]  & \HGal^2(k,\QZ)\\
	       & (k^{\times})^{\vee} \ar[r]^{N_{\Xbar}^{\vee}} & \CXbar^{\vee} \ar[r] &  
	    (\CXbarg)^{\vee}  &\qquad .
	     }
	    }
	    \end{equation}
	    Here, for a $G_k$-module $M$, we denote by $M^{G_k}$ the $G_k$-invariant 
	    submodule of $M$ and $\HGal^2(k,\QZ) = 0$ as in \eqref{eq:H2(k)}.  
	    By local class field theory, the left vertical map $\rho_{k}^{\vee}$ in \eqref{eq:QZ} is 
	    bijective. 
	    Because of 
	    $\HGal^1(k,\QZ) \simeq (G_k^{\ab})^{\vee}$, and 
	    $H^1(\Xbar,\QZ) \simeq \piabXbar^{\vee}$, 
%
	    we obtain $H^1(\Xbar_{\kbar},\QZ)^{G_k} \simeq (\piabXbarg)^{\vee}$. 
	    The right vertical map in the diagram \eqref{eq:QZ} coincides with $\rhoXbargv$ and is surjective by 
	    \eqref{eq:rhoXbarm0}. 
	    Therefore, $\rhoXbar^{\vee}$ is surjective. 
%
%
%
%
\end{proof}

%

\begin{cor}
	\label{cor:ucftmv}
	We assume that we have $r(\Xbar) = 0$. 
	Then  
	$\rhoXbarmv:H^1(\Xbar,\Z/m) \to (\CXbar/m)^{\vee}$ is bijective 
	for any $m\in \Z_{\ge 1}$. 		
\end{cor}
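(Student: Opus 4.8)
The plan is to deduce this from Corollary~\ref{cor:ucftv} by comparing $\QZ$-coefficients with $\Z/m$-coefficients. Since $r(\Xbar)=0$, Corollary~\ref{cor:ucftv}(i) gives $\Ker(\rhoXbarv)\simeq(\QZ)^{0}=0$ and Corollary~\ref{cor:ucftv}(ii) gives that $\rhoXbarv$ is surjective, so $\rhoXbarv:H^1(\Xbar,\QZ)\to\CXbarv$ is an isomorphism of abelian groups. Because an isomorphism of abelian groups restricts to an isomorphism on $m$-torsion subgroups, the corollary will follow once we identify $\rhoXbarmv$ with the restriction of $\rhoXbarv$ to the $m$-torsion subgroups on both sides.

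For the source, the short exact sequence $0\to\Z/m\to\QZ\onto{m}\QZ\to0$ of constant \'etale sheaves on $\Xbar$ induces a long exact sequence; since $H^0(\Xbar,\QZ)=\QZ$ is divisible, the boundary map $H^0(\Xbar,\QZ)\to H^1(\Xbar,\Z/m)$ vanishes, so the inclusion $\Z/m\inj\QZ$ yields $H^1(\Xbar,\Z/m)\isomto H^1(\Xbar,\QZ)[m]$. For the target, a finite-order continuous character of $\CXbar/m$ is, via the quotient topology, exactly a finite-order continuous character of $\CXbar$ annihilated by $m$, so $(\CXbar/m)^{\vee}=\CXbarv[m]$; equivalently, one may use $H^1(\Xbar,\QZ)\simeq\piabXbar^{\vee}$ and $H^1(\Xbar,\Z/m)\simeq(\piabXbar/m)^{\vee}$ together with the analogous statements for $\CXbar$, both identifications being the tautological inclusions of $m$-torsion subgroups. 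By the functoriality of $\rho_{\Xbar,m}$ in $m$ --- the same compatibility between $\rhoXbarm$ and $\rhoXbar$ already exploited in Corollary~\ref{cor:ucftv} --- the map $\rhoXbarmv$ is then precisely the restriction $H^1(\Xbar,\QZ)[m]\to\CXbarv[m]$ of the isomorphism $\rhoXbarv$, hence bijective.

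The only delicate point is the bookkeeping in the middle step: one must check that the two identifications (of the $H^1$'s via the Bockstein sequence, and of the Pontrjagin duals via their definitions) are compatible, so that $\rhoXbarmv$ genuinely \emph{is} the restriction of $\rhoXbarv$ rather than merely agreeing with it up to these identifications. This is purely formal but should be made explicit. Alternatively one could prove injectivity and surjectivity of $\rhoXbarmv$ separately --- surjectivity being close to \eqref{eq:rhoXbarm0} combined with the argument of Corollary~\ref{cor:ucftv} --- but routing everything through the $\QZ$-statement is the cleanest.
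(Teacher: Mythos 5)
Your proposal is correct and follows essentially the same route as the paper: the paper also deduces the statement from Corollary~\ref{cor:ucftv} by placing $\rhoXbarmv$ and $\rhoXbarv$ in a commutative square whose horizontal maps are the inclusions $H^1(\Xbar,\Z/m)\inj H^1(\Xbar,\QZ)$ and $(\CXbar/m)^{\vee}\inj\CXbarv$, and then observing that the right vertical map is an isomorphism when $r(\Xbar)=0$, so the restriction to the $m$-torsion subgroups is one as well. Your extra care with the Bockstein identification and the equality $(\CXbar/m)^{\vee}=\CXbarv[m]$ just makes explicit what the paper leaves implicit.
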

\begin{proof}
We have the following commutative diagram:
\[
\xymatrix{
H^1(\Xbar,\Z/m)  \ar[d]^-{\rho_{\Xbar,m}^{\vee}}\ar@{^{(}->}[r] & H^1(\Xbar,\QZ) \ar[d]^-{\rho_{\Xbar}^{\vee}}_{\simeq} \\ 
(\CXbar/m)^{\vee}\ar@{^{(}->}[r]  & \CXbarv,
}
\]
where the vertical maps are induced from  
$\piabXbar \surj \piabXbar/m$ and $\CXbar\surj \CXbar/m$. 
The assertion follows from Cor.\ \ref{cor:ucftv}.
\end{proof}

\subsection*{Open curves}
Recall that 
$X \subset \Xbar$  is a non-empty open subscheme 
and   
$\rhoX$ induces 
$\rhoXv : H^1(X,\QZ) \to \CXv$ and 
$\rhoXm^{\vee}: H^1(X,\Z/m) \to (\CX/m)^{\vee}$
for each $m\in \Z_{\ge 1}$.
\begin{prop}
	\label{prop:p}
Assume that $r(\Xbar) = 0$. 
Then  
$\rhoXm^{\vee}: H^1(X,\Z/m) \to (\CX/m)^{\vee}$ is bijective 
for any $m\in \Z_{\ge 1}$. 
\end{prop}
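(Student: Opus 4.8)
The plan is to reduce Proposition~\ref{prop:p} to the projective case (Cor.~\ref{cor:ucftmv}) by comparing $X$ and $\Xbar$ via a localization sequence, exactly as in the proof of Thm.~\ref{thm:coker} but now with finite coefficients $\Z/m$. First I would fix $m \in \Z_{\ge 1}$ and, for each $x \in \Xinf$, run the localization sequence for $i:x\inj Y$ with $Y=\Spec(\O_{\Xbar,x}^{\wedge})$ (or its henselian variant), together with the two-dimensional local class field theory isomorphism $\rho_{\kXx,m}^{\vee}:(G_{\kXx}^{\ab}/m)^{\vee}=H^1(\kXx,\Z/m)\isomto (K_2(\kXx)/m)^{\vee}$ from \eqref{eq:rhoKmv}. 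Using the excision isomorphisms $H^2_x(\Xbar,\QZ)\simeq H^2_x(Y,\QZ)$ and $H^2_{\Gal}(\kx,\QZ)=0$ from \eqref{eq:H2(k)}, one gets, for each $x\in\Xinf$, a commutative square relating $H^1(\kXx,\Z/m)$ to $(K_2(\kXx)/m)^{\vee}$ via $\rho_{\kXx,m}^{\vee}$, with a downstairs exact sequence splitting off $(\kxt/m)^{\vee}$; this is the mod-$m$ analogue of diagram~\eqref{eq:locx}.

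Next I would assemble these into a commutative diagram with exact rows, the mod-$m$ analogue of \eqref{eq:loc}:
\[
\xymatrix@R=5mm@C=5mm{
0\ar[r] & H^1(\Xbar,\Z/m)\ar[d]_{\rhoXbarmv}\ar[r] & H^1(X,\Z/m)\ar[d]_{\rhoXm^{\vee}}\ar[r] & \displaystyle{\bigoplus_{x\in\Xinf}H^2_x(\Xbar,\Z/m)}\ar[d]^{\oplus\phi_{x,m}} \\
0\ar[r] & (\CXbar/m)^{\vee}\ar[r] & (\CX/m)^{\vee}\ar[r] & \displaystyle{\bigoplus_{x\in\Xinf}(U^0K_2(\kXx)/m)^{\vee}},
}
\]
where the top row is the localization sequence for $\Xinf\inj\Xbar$ with $\Z/m$-coefficients (using $H^1(X,\Z/m)\simeq(\piabX/m)^{\vee}$, \cite{SGA4.5} / \cite{Fu11}) and the bottom row comes from the defining presentation of $\CX$ as a cokernel (Def.~\ref{def:CX}). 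By the hypothesis $r(\Xbar)=0$ and Cor.~\ref{cor:ucftmv}, the left vertical map $\rhoXbarmv$ is bijective. A diagram chase then shows $\rhoXm^{\vee}$ is injective once one knows $\bigoplus\phi_{x,m}$ is injective on the image of $H^1(X,\Z/m)$, and surjective once one knows the right-hand square is "exact enough" — i.e.\ that the image of $H^1(X,\Z/m)$ in $\bigoplus H^2_x(\Xbar,\Z/m)$ maps onto the image of $(\CX/m)^{\vee}$ in $\bigoplus(U^0K_2(\kXx)/m)^{\vee}$, and that these images actually coincide under the vertical maps $\phi_{x,m}$ (which are the mod-$m$ versions of the $\phi_x$ in \eqref{eq:locx}, hence injective by Thm.~\ref{thm:lcft}).

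The main obstacle I expect is the \textbf{surjectivity} of $\rhoXm^{\vee}$, i.e.\ controlling the cokernels of the outer vertical maps and verifying compatibility of the two localization sequences. Injectivity is comparatively soft: $\rhoXbarmv$ is an isomorphism and $\rho_{\kXx,m}^{\vee}$ is an isomorphism, so the snake lemma gives $\Ker(\rhoXm^{\vee})=0$ almost immediately. For surjectivity one must check that the horizontal arrows in the top row and in the bottom row have matching images in $\bigoplus H^2_x$ and $\bigoplus(U^0K_2(\kXx)/m)^{\vee}$ respectively — this amounts to comparing the boundary map in étale cohomology with the boundary map $\dx$ on the Milnor $K$-theory side, which is exactly the content of Prop.~\ref{prop:rhoK}(i) applied pointwise, together with the mod-$m$ Tate/Poitou duality inputs already invoked in Lem.~\ref{lem:rhoXm}. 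I would therefore lean on Lem.~\ref{lem:rhoXm} (which already identifies $\CX/m$ with a piece of $H^3_c(X,\Z/m(2))$) and dualize: for $m\in\Np$ the statement is essentially the dual of Lem.~\ref{lem:rhoXm} combined with the projective case, and the genuinely new work is only at primes dividing $p$, where one falls back on the logarithmic de Rham–Witt description of $H^1(-)$ from Section~\ref{sec:local} and the fact that any continuous homomorphism $K_2(\kXx)\to\QZ$ has finite order. Closing that $p$-part gap cleanly is the crux.
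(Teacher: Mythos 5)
Your mod-$m$ localization diagram and the injectivity half are fine: the paper actually deduces injectivity of $\rhoXmv$ more directly from the density of $\Im(\rhoX)$ (Thm.~\ref{thm:coker} with $r(\Xbar)=0$), but your diagram chase also works because each $\phi_{x,m}$ is injective. The prime-to-$p$ surjectivity is likewise essentially right, though you should make explicit that it rests on the finiteness of $\piabX/m\simeq\pitabX/m$ for $m\in\Np$, which is what makes the dual of the injection $\rhoXm$ automatically surjective.

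The genuine gap is the $p$-part surjectivity, which you yourself leave open (``closing that $p$-part gap cleanly is the crux''). The obstruction is concrete and is not a matter of comparing boundary maps: given $\vp\in(\CX/p^n)^{\vee}$, the diagram produces $\gamma\in\bigoplus_{x\in\Xinf}H^2_x(\Xbar,\Z/p^n)$ with $\oplus\phi_{x,p^n}(\gamma)=i(\vp)$, but $\gamma$ lifts to $H^1(X,\Z/p^n)$ only if its image $j(\gamma)$ in $H^2(\Xbar,\Z/p^n)$ vanishes, and that group is not zero for $p$-torsion coefficients over a local base field, so the right-hand square can never be ``exact enough'' on its own. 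The paper closes this by a descent argument that is entirely absent from your proposal: one kills $j(\gamma)$ after base change to a finite Galois extension $k'/k$ (possible because $H^2(\Xbar\otimes_k\kbar,\Z/p^n)=0$), reduces via the projection formula (Lem.~\ref{lem:pf}) and an induction to the case where $\Gal(k'/k)$ is cyclic of order $p$, and then descends $\vp$ using the Hochschild--Serre sequence together with a separate lemma identifying $\Ker(N_{X'/X}^{\vee})\simeq\Gal(k'/k)^{\vee}$, which in turn is proved by reducing to the proper case through Thm.~\ref{thm:lcft} and Cor.~\ref{cor:ucftv}. Neither the logarithmic de Rham--Witt description of $H^1$ nor the finite-order property of continuous characters of $K_2(\kXx)$ substitutes for this step, so as written the proof of the $p$-part is missing its main idea.
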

\begin{proof}
From the assumption $r(\Xbar) = 0$ 
and Thm.\ \ref{thm:coker}, 
$\rhoX$ and hence $\rhoXm$ has dense image. 
On the dual groups, $\rhoXmv$ is injective for any $m\in \Z_{\ge 1}$.  
In the following, we show that $\rhoXmv$ is surjective. 

\sn  
\textbf{(Prime to $p$-part)}  
For $m \in \Np$,  
we have an isomorphism 
$\piabX/m \simeq \pitabX/m$ of \emph{finite groups} as noted in the proof of Thm.\ \ref{thm:main2} 
(\Cf \eqref{eq:pipit}). 
Since 
$\rhoXm$ is an injective homomorphism of finite groups (Lem.\ \ref{lem:rhoXm}),  
the dual $\rhoXm^{\vee}$ becomes surjective.

\sn 
\textbf{($p$-part)} 
Instead of using $\Z/p^n$ with $\QZ$ in \eqref{eq:locx}, 
for each $x\in \Xinf$, 
we have the following commutative diagram with exact rows: 
\[
  \xymatrix@C=5mm{
 0 \ar[r] & \HGal^1(\kx,\Z/p^n) \ar[d]_{\simeq}^{\rho_{\kx,p^n}^{\vee}}\ar[r] & 
 \HGal^1(\kXx,\Z/p^n) 
 \ar[d]^{\rho_{\kXx,p^n}^{\vee}}_{\simeq} \ar[r]&  H^2_x(\Xbar,\Z/p^n) 
 \ar[r]\ar@{-->}[d]^{\phi_{x,p^n}} & 0 \\
 0 \ar[r] & (\kxt/p^n)^{\vee} \ar[r] & (K_2(\kXx)/p^n)^{\vee} 
 \ar[r]& (U^0K_2(\kXx)/p^n)^{\vee}  & & \hspace{-23mm},\\
 }
\]
where the middle vertical map is bijective \eqref{eq:rhoKmv}.
As in \eqref{eq:loc}, the localization 
sequence and \eqref{eq:rhoKmv} give the following diagram 
with exact rows:
\[\vcenter{
\entrymodifiers={!! <0pt, .8ex>+}
 \xymatrix@C=4mm@R=5mm{
 0\ar[r] & H^1(\Xbar, \Z/p^n) \ar[d]^{\rho_{\Xbar,p^n}^{\vee}}_{\simeq}\ar[r] 
 & 
 H^1(X, \Z/p^n) \ar[d]^{\rho_{X,p^n}^{\vee}}\ar[r]
   & \displaystyle{\bigoplus_{x\in \Xinf}H^2_{x}(\Xbar, \Z/p^n)} 
\ar@{^{(}->}[d]^-{\phi }  \ar[r]^-{j} & H^2(\Xbar,\Z/p^n)\\
  0 \ar[r] & (\CXbar/p^n)^{\vee} \ar[r] &  (\CX/p^n)^{\vee} \ar[r]^-{i} 
   & \displaystyle{\bigoplus_{x\in \Xinf}(U^0K_2(\kXx)/p^n)^{\vee}} & ,
 }}
\] 
where $\phi := \oplus \phi_{x,p^n}$. 
From Cor.\ \ref{cor:ucftmv}, 
$\rho_{\Xbar,p^n}^{\vee}$ is bijective. 
\begin{claim}
$\Im(i) \subset \Im(\phi)$. 
\end{claim}
\begin{proof}
The map $i$ can be written as the composition  
\[
\entrymodifiers={!! <0pt, .8ex>+}
\xymatrix@C=5mm{
(\CX/p^n)^{\vee} \ar[r] & \displaystyle{\bigoplus_{x\in \Xinf} (K_2(\kXx)/p^n)^{\vee}} \ar[r]^{\oplus i_x}& \displaystyle{\bigoplus_{x\in \Xinf} (U^0K_2(\kXx)/p^n)^{\vee}},
}
\] 
where the first map is given by the natural map $K_2(\kXx) \to \CX$ for each $x\in \Xinf$ and 
the latter which is denoted by $\oplus i_x$ 
is induced from the inclusion $U^0K_2(\kXx) \inj K_2(\kXx)$ for each $x\in X_{\infty}$. 
For each $x\in \Xinf$, as in \eqref{eq:locx}, 
there exists a commutative diagram
\[
\xymatrix{
\HGal^1(\kXx,\Z/p^n) \ar[d]_{\rho_{\kXx,p^n}^{\vee}}^{\simeq} \ar[r] & H^2_x(\Xbar,\Z/p^n) \ar[d]^{\phi_{x,p^n}} \\
(K_2(\kXx)/p^n)^{\vee} \ar[r]^{i_x} & (U^0K_2(\kXx)/p^n)^{\vee}.
}
\]
Here, the left vertical map is bijective \eqref{eq:rhoKmv} 
and the claim follows. 
\end{proof}

\sn
\textbf{(Proof of Prop.\ \ref{prop:p} - continued)} 
To show that $\rho_{X,p^n}^{\vee}$ is surjective, 
take $\vp\in (\CX/p^n)^{\vee}$. 
From the above Claim, 
there exists $\gamma \in \bigoplus_x H_x^2(\Xbar,\Z/p^n)$ such that 
$i(\vp) = \phi(\gamma)$. 
From $H^2(\Xbar \otimes_k \kbar,\Z/p^n) = 0$ 
(\cite{SGA4}, Exp.\ X, Cor.\ 5.2) 
and Cor.\ \ref{cor:ucftmv}, 
there exists a finite Galois extension $k'$ of $k$ such that 
the image of $j(\gamma)$ by 
the homomorphism 
$\fbar^{\ast}: H^2(\Xbar,\Z/p^n) \to H^2(\Xpbar,\Z/p^n)$ 
becomes zero, 
where 
$\fbar:\Xpbar := \Xbar \otimes_k k' \to \Xbar$ is the projection.  
%
Put also $X' := X\otimes_k k'$ and let $f:X'\to X$ be the induced morphism.  
In this setting, we have the norm homomorphism 
$N:= N_{X'/X}:\CXp \to \CX$ defined in Def. \ref{def:norm}.  
By \eqref{eq:norm}, this makes 
the following diagram commutative: 
\[
  \xymatrix{
    H^1(X,\Z/p^n) \ar[d]_{\rho_{X,p^n}^{\vee}}\ar[r]^{f^{\ast}} & H^1(X', 
\Z/p^n) \ar[d]^{\rho_{X',p^n}^{\vee}} \\
    (\CX/p^n)^{\vee} \ar[r]^{N^{\vee}_{p^n}} & (C(X')/p^n)^{\vee},
  }
\]
where $N^{\vee}_{p^n}$ is the induced homomorphism by $N = N_{X'/X}$.  
Thus, 
there exists  $\chi'\in H^1(X',\Z/p^n)$ such that   
$\vp' := N^{\vee}_{p^n}(\vp) = \rho_{X',p^n}^{\vee}(\chi')$ in 
$(C(\Xbar')/p^n)^{\vee}$ by the diagram chase.
It is left to show that  
$\vp$ comes from $H^1(X,\Z/p^n)$. 

Let $H$ be the $p$-Sylow subgroup of $G := \Gal(k'/k)$ 
and $k_H$ the fixed field of $H$ in $k'$. 
Putting $X_{H} := X\otimes_k k_H$, 
the diagram 
\[
  \xymatrix{
	  & H^1(X_{H},\Z/p^{n}) \ar[d]_{\rho_{X_H,p^n}^{\vee}} \ar[r] & H^1(X,\Z/p^n) 
	  \ar[d]^{\rho_{X,p^n}^{\vee}} \\ 
	  (\CX/p^n)^{\vee} \ar@/_3pc/[rr]^{[k_H:k]} 
	  \ar[r]^{N_{X_H/X,p^n}^{\vee}} 
	  & 
	  (C(X_H)/p^n)^{\vee} 
	   \ar[r]^{i_{X_H/X,p^n}^{\vee}} & (\CX/p^n)^{\vee}\\
	   &
  }
\]
is commutative by \eqref{eq:norm}. 
From Lem.\ \ref{lem:pf}, we have $N_{X_H/X}\circ i_{X_H/X} = 
[k_H:k]\Id_{\CX}$. 
Since the order of $\varphi$ is a power of $p$, 
using the above diagram, 
we may assume $k_H = k$ and $G = \Gal(k'/k)$ is a $p$-group.
Take a field extensions
$k = k_0 \subset k_1 \subset \cdots \subset k_s = k'$ such that  $k_{i+1}/k_i$ is a cyclic extension of degree $p$. 
By induction on $i$,   
we may assume that 
the Galois group $G$ is a cyclic group of the order $p$. 
We have the following commutative diagram:
\begin{equation}
\label{eq:G}
\vcenter{
  \xymatrix@C=5mm{
  0\ar[r] & G^{\vee} \ar[d]\ar[r]&  H^1(X,\Z/p^n) 
\ar[d]^{\rho_{X,p^n}^{\vee}}\ar[r]^-{f^{\ast}}& H^1(X',\Z/p^n)^{G} 
\ar[d]^{\rho_{X',p^n}^{\vee}} \ar[r] &  0 \\
  0\ar[r] & \Ker(N^{\vee}_{p^n}) \ar[r] & (\CX/p^n)^{\vee} \ar[r]^-{N^{\vee}_{p^n}} & 
(C(X')/p^n)^{\vee\ G} & .  
  }
  }
\end{equation}
The upper horizontal sequence is exact 
which comes from the Hochschild-Serre spectral sequence 
$\HGal^s(G, H^t(X',\Z/p^n)) \Rightarrow H^{s+t}(X,\Z/p^n)$
%
associated with $f:X'\to X$ (\Cf \cite{SGA4}, Exp.\ VIII, Cor.\ 8.5) 
and 
$H^2(G,\Z/p^n) = 0$ (\cite{NSW}, Prop.\ 6.5.10). 
Since $\vp'=N^{\vee}_{p^n}(\vp)$ is fixed by $G$ and 
$\rho_{X',p^n}^{\vee}$ is \emph{injective}, 
$\chi'$ is also fixed by $G$. 
From the diagram \eqref{eq:G}, 
it is enough to show that  
$\Ker(N_{p^n}^{\vee}) \simeq G^{\vee}$. 
Since the left vertical map in \eqref{eq:G} is injective, 
the lemma below implies that 
$\vp$ comes from an element of $H^1(X,\Z/p^n)$ 
and thus $\rho_{X,p^n}^{\vee}$ is surjective. 
\end{proof}
\begin{lem}
Let $k'/k$ be a Galois extension of $[k':k] = p$. 
We assume that the base change $X' := X\otimes_k k' \to X$ is not a completely split covering. 
Then, the following sequence is exact: 
\[
  \xymatrix@C=5mm{
  0\ar[r] & G^{\vee} \ar[r] & \CX^{\vee} \ar[r]^{N^{\vee}} & C(X')^{\vee},  
  }
\]
where  $G = \Gal(k'/k)$ and $N = N_{X'/X}$.
\end{lem}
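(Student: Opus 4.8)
The plan is to dualize the statement into a computation of the abstract cokernel of the norm map $N = N_{X'/X}$, and then to evaluate that cokernel using the norm maps to the base fields, local class field theory, and the unramified class field theory of $\Xbar$ and $\Xbar'$.

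Write $i = i_{X'/X}$. Since $[k(X'):k(X)] = [k':k] = p$, Lem.\ \ref{lem:pf} gives $N\circ i = p\cdot\Id_{\CX}$, from which I extract two facts. First, $i^{\vee}\circ N^{\vee} = p\cdot\Id$ on $\CXv$, so $\Ker(N^{\vee})\subseteq\CXv[p]$; concretely, $\Ker(N^{\vee})$ consists of the finite-order characters of $\CX$ that vanish on $\Im(N)$. Second, $p\cdot\CX = \Im(N\circ i)\subseteq\Im(N)$, so $\Coker(N)$ is an $\Fp$-vector space, every homomorphism from it to $\QZ$ has finite order, and hence $\Ker(N^{\vee}) = \Hom(\Coker(N),\QZ) = \Coker(N)^{\vee}$. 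It therefore suffices to prove
\[
  \Coker\bigl(N_{X'/X}\colon\CXp\to\CX\bigr)\;\simeq\;G ;
\]
the exact sequence of the lemma then results from applying the exact functor $\Hom(-,\QZ)$ to $\CXp\xrightarrow{N}\CX\xrightarrow{\delta}G\to 0$, where $\delta$ is the quotient map, the inclusion $G^{\vee}\hookrightarrow\CXv$ being pullback along $\delta$; it is injective because $\delta$ is surjective.

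Next I would construct $\delta$ and verify it kills $\Im(N)$. Local class field theory gives $k^{\times}/N_{k'/k}(k'^{\times})\xrightarrow{\sim}\Gal(k'/k) = G$ because $k'/k$ is cyclic of degree $p$; composing with the map $N_X\colon\CX\to k^{\times}$ of \eqref{eq:VX} defines $\delta\colon\CX\to G$. Transitivity of norms gives $N_X\circ N = N_{k'/k}\circ N_{X'}$, so $\delta$ annihilates $\Im(N)$ and factors through $\Coker(N)$. Furthermore the covering $X'\to X$ corresponds to the character $\piabX\xrightarrow{\varphi}\Gkab\twoheadrightarrow G$, which by the commutativity of \eqref{eq:VX} agrees on $\Im(\rhoX)$ with $\delta$; since $\Cokertop(\rhoX)$ classifies the completely split coverings, the assumption that $X'\to X$ is not completely split says precisely that this character is nonzero on $\Im(\rhoX)$, hence, as $G\simeq\Z/p$, that $\delta$ is surjective. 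This gives both $\Coker(N)\twoheadrightarrow G$ and the injectivity of $G^{\vee}\hookrightarrow\CXv$.

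It remains to bound $\#\Coker(N)$ by $p$. The reciprocity sequences \eqref{eq:VX} for $X$ and $X'$, together with $N$, the identity $N_X\circ N = N_{k'/k}\circ N_{X'}$, and the restricted norm $N^0\colon C(X')^0\to\CXg$, form a morphism of short exact sequences, so the snake lemma yields
\[
  \Coker(N^0)\longrightarrow\Coker(N)\longrightarrow\Coker\bigl(N_{k'/k}\colon\Im(N_{X'})\to\Im(N_X)\bigr)\longrightarrow 0 .
\]
Hilbert 90 for $k'/k$ — i.e.\ $\wh{H}^{-1}(G,k'^{\times}) = 0$, equivalently $k'^{\times}/(\sigma-1)k'^{\times}\hookrightarrow k^{\times}$ with image $N_{k'/k}(k'^{\times})$ — together with $k^{\times}/N_{k'/k}(k'^{\times}) = G$ should control the right-hand term, and a parallel argument should force $\Im(N)\cap\CXg = \Im(N^0)$ so that $\Coker(N^0)$ contributes nothing. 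The step I expect to be the main obstacle is exactly these surjectivity statements for the degree-$0$ and base-field norms, namely $N^0\colon C(X')^0\to\CXg$ and $N_{X'}\colon\CXp\to k'^{\times}$ modulo $(\sigma-1)k'^{\times}$: proving them should require unramified class field theory for $\Xbar$ and $\Xbar'$ (Thm.\ \ref{thm:ucft}), the localization sequence relating $\CXg$ to $\CXbarg$ and the local factors at $\Xinf$, and the running hypothesis $r(\Xbar) = 0$, which makes $\Cokertop(\rhoX) = 0$ and renders the relevant degree-$0$ fundamental groups essentially finite. With $\Coker(N)\simeq G$ established, applying $\Hom(-,\QZ)$ concludes the proof.
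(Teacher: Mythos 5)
Your overall strategy is genuinely different from the paper's, but as written it has a real gap exactly where the work has to happen. You reduce the lemma to the statement $\Coker(N_{X'/X})\simeq G$, and then the only tool you bring to bear on the upper bound $\#\Coker(N_{X'/X})\le p$ is a snake-lemma diagram whose decisive inputs --- surjectivity of the degree-zero norm $N^0:C(X')^0\to \CX^0$ (or at least triviality of the image of $\Coker(N^0)$ in $\Coker(N)$) and control of $\Im(N_X)/N_{k'/k}(\Im(N_{X'}))$ --- are explicitly deferred (``should control'', ``I expect to be the main obstacle''). These are not routine: $\Im(N_X)$ need not be all of $k^{\times}$, and nothing in the paper gives surjectivity of $N^0$. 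Moreover the reduction itself is stronger than what the lemma asserts: $\Ker(N^{\vee})$ is the \emph{continuous} finite-order dual of $\Coker(N)$, not $\Hom(\Coker(N),\QZ)$, so the lemma is compatible with $\Coker(N)$ being a large $\mathbb{F}_p$-vector space of which only a $\Z/p$ worth of characters is continuous; you may be trying to prove something that is harder than, or even stronger than, the truth. Finally, you invoke $r(\Xbar)=0$ as a ``running hypothesis'', but the lemma is stated without it (its role is played by the hypothesis that $X'\to X$ is not completely split, which you do use correctly to get surjectivity of $\delta$ and hence injectivity of $G^{\vee}\to\CXv$ --- that half of your argument is fine).

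The paper avoids the cokernel computation entirely by staying on the dual side and localizing at the boundary: given $\psi\in\Ker(N^{\vee})$, its restriction $\psi_x\in K_2(\kXx)^{\vee}$ at each $x\in\Xinf$ is killed by the norm from $K_2(k'\kXx)$, so by two-dimensional local class field theory (Thm.~\ref{thm:lcft}) the corresponding character $\chi_x\in H^1(\kXx)$ dies on the unramified extension $k'\kXx/\kXx$, hence is unramified, hence $\psi_x$ annihilates $U^0K_2(\kXx)$. Therefore $\psi$ descends to $\CXbar^{\vee}$, and the statement reduces to the proper case $0\to G^{\vee}\to\CXbar^{\vee}\to C(\Xbar')^{\vee}$, which follows from Cor.~\ref{cor:ucftv}. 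This local-to-proper reduction is the idea your proposal is missing; if you want to salvage your route, the same local input (Kato's computation of $K_2(\kXx)/N K_2(k'\kXx)$ together with the unramified theory for $\Xbar$) is what would have to replace the unproved surjectivity claims.
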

\begin{proof}
  A character $\psi \in \Ker(N^{\vee})$ induces an element $\psi_x$  of $K_2(\kXx)^{\vee}$  
  for each $x \in \Xinf$. 
  Since $\psi_x$ is in the kernel of 
  $N_{k'\kXx/\kXx}^{\vee}:K_2(\kXx)^{\vee} \to K_2(k'\kXx)^{\vee}$, 
  the corresponding character $\chi_x := (\rho_{\kXx}^{\vee})^{-1}(\psi_x) \in 
  H^1(\kXx)$ (Thm.\ \ref{thm:lcft} (i))  
  is annihilated by the unramified extension $k'\kXx/\kXx$.  
  In particular, $\chi_x$ is unramified 
  so that 
  $\psi_x$ annihilates $U^0K_2(\kXx)$ (Thm.\ \ref{thm:lcft} (ii)).
  Thus, the assertion is reduced to the case of $X = \Xbar$, that is, 
  the exactness of 
  \[
  \xymatrix@C=5mm{
    0 \ar[r] & G^{\vee}\ar[r] &  \CXbar^{\vee} \ar[r]^{N_{\Xpbar/\Xbar}^{\vee}} & \CXpbar^{\vee}. 
  }
  \]  
  This follows from Cor.\ \ref{cor:ucftv}. 
\end{proof}
\begin{thm} 
\label{thm:main}
Suppose that we have $r(\Xbar) = 0$. 
Then, the dual of the reciprocity homomorphism 
$\rhoXv:\piabX^{\vee} \to \CX^{\vee}$ is bijective. 
\end{thm}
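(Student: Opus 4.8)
The plan is to deduce Theorem~\ref{thm:main} from Proposition~\ref{prop:p} by a limiting argument over $m$. Recall that by definition $\piabX^{\vee} = \bigcup_{m} (\piabX/m)^{\vee} = \bigcup_{m} H^1(X,\Z/m)$ and similarly $\CXv = \bigcup_m (\CX/m)^{\vee}$, since every element of either dual group has finite order. Concretely, the natural maps $H^1(X,\Z/m) \to H^1(X,\QZ) = \piabX^{\vee}$ and $(\CX/m)^{\vee} \to \CXv$ are injective with union the whole group, and $\rhoXv$ is the colimit of the maps $\rhoXm^{\vee}: H^1(X,\Z/m) \to (\CX/m)^{\vee}$.

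First I would verify that the diagrams
\[
\xymatrix@C=8mm{
H^1(X,\Z/m) \ar[r]^-{\rhoXm^{\vee}} \ar[d] & (\CX/m)^{\vee} \ar[d] \\
H^1(X,\QZ) \ar[r]^-{\rhoXv} & \CXv
}
\]
commute for every $m$ and are compatible with the transition maps $H^1(X,\Z/m) \inj H^1(X,\Z/m')$ for $m \mid m'$; this is immediate from the functoriality of the reciprocity map in $m$ and from the compatibility already used throughout Section~\ref{sec:main}. Passing to the colimit over all $m \in \Z_{\ge 1}$ (which is filtered under divisibility), and using that filtered colimits are exact, the colimit of the isomorphisms $\rhoXm^{\vee}$ from Proposition~\ref{prop:p} is again an isomorphism. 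Since $\varinjlim_m H^1(X,\Z/m) = \piabX^{\vee}$ and $\varinjlim_m (\CX/m)^{\vee} = \CXv$ with the maps in these colimits being precisely the inclusions described above, this colimit is exactly $\rhoXv$, which is therefore bijective.

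I do not expect a serious obstacle here, since the real work — the $p$-part via the norm-descent argument over a $p$-extension of $k$, and the prime-to-$p$ part via the tame fundamental group and the duality of Lemma~\ref{lem:rhoXm} — has already been carried out in Proposition~\ref{prop:p}. The only point requiring a little care is the identification $\piabX^{\vee} = \varinjlim_m H^1(X,\Z/m)$ compatibly with $\rhoXv$: one should note that $H^1(X,\QZ) \simeq \piabX^{\vee}$ (as recorded in \eqref{eq:H1pi}) and that under this identification the subgroup $H^1(X,\Z/m)$ corresponds to the characters of $\piabX$ killed by $m$, so the colimit over $m$ recovers all finite-order characters, i.e. all of $\piabX^{\vee}$; the same on the idèle side follows from the definition of $\CXv$. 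Once this bookkeeping is in place, the statement follows by taking the direct limit of the isomorphisms of Proposition~\ref{prop:p}.
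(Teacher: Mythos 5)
Your proposal is correct and is essentially the paper's own argument: the paper likewise reduces Theorem~\ref{thm:main} to Proposition~\ref{prop:p} by observing that every element of $\CX^{\vee}$ has finite order and therefore factors through $(\CX/m)^{\vee}$ for some $m$, then lifting it via the isomorphism $\rho_{X,m}^{\vee}$ --- the same reduction you carry out, phrased element-by-element rather than as a filtered colimit. (For injectivity the paper cites Theorem~\ref{thm:coker} directly, but that is exactly the input behind the injectivity half of Proposition~\ref{prop:p}, so your colimit formulation covers it as well.)
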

\begin{proof} 
We use 
$\piabX^{\vee} \simeq H^1(X,\QZ)$ \eqref{eq:H1pi}. 
The injectivity of $\rhoXv$ follows from Thm.\ \ref{thm:coker}.  
Take any $\varphi \in \CX^{\vee}$. 
Since $\varphi$ has finite order, 
$\varphi$ defines $\varphi_m \in (\CX/m)^{\vee}$ 
for some $m \in \Z_{\ge 1}$. 
	Consider the commutative diagram 
	\[
	  \xymatrix{
	    H^1(X,\Z/m) \ar[d]_{\rho_{X,m}^{\vee}}^{\simeq} \ar[r] & 
	    H^1(X,\Q/\Z) \ar[d]^{\rhoXv} \\
	    (\CX/m)^{\vee} \ar[r] & \CX^{\vee}
	  }
	\]
	where the left vertical map is bijective (Prop.\ \ref{prop:p}). 
	Therefore, one can find $\chi \in H^1(X,\QZ)$ such that 
	$\rhoXv(\chi) = \varphi$ and hence $\rhoXv$ is surjective.
\end{proof}

\bibliographystyle{amsplain}
\bibliography{cft_p}


\bigskip\noindent
Toshiro Hiranouchi \\
Department of Mathematics, Graduate School of Science, Hiroshima University
1-3-1 Kagamiyama, Higashi-Hiroshima, 739-8526 JAPAN\\
Email address: {\tt hira@hiroshima-u.ac.jp }

\end{document}